\title{Filling in the semantics of intuitionistic conditional logic}
\author{Brendan Dufty$^1$ \and Jim de Groot$^2$}
\date{\footnotesize{%
    $^1$\texttt{brendan.dufty@du.edu}, University of Denver, Denver, U.S.A.\\%
    $^2$\texttt{jim.degroot@unibe.ch}, University of Bern, Bern, Switzerland\\[2ex]%
}}
  \theoremstyle{definition}
    \newtheorem{para}{}[section]
    \newtheorem{definition}[para]{Definition}
    \newtheorem{example}[para]{Example}
    \newtheorem{remark}[para]{Remark}
    \newtheorem{question}[para]{Question}
  \theoremstyle{plain}
    \newtheorem{lemma}[para]{Lemma}
    \newtheorem{corollary}[para]{Corollary}
    \newtheorem{theorem}[para]{Theorem}
    \newtheorem{proposition}[para]{Proposition}
\newcommand{\lan}{\mathcal}
\newcommand{\mb}{\mathbb}
\newcommand{\mc}{\mathcal}
\newcommand{\mf}{\mathfrak}
\renewcommand{\phi}{\varphi}
\newcommand{\cto}{\boxright}
\newcommand{\dto}{\boxdotright}
\newcommand{\up}{\mathrm{up}}
\newcommand{\Prop}{\mathrm{Prop}}
\newcommand{\power}{\mathcal{P}}
\renewcommand{\iff}{\quad\text{iff}\quad}
\renewcommand{\log}[1]{\mathbf{#1}}
\newcommand{\even}{\mathrm{even}}
\newcommand{\Ax}{\mathrm{Ax}}
\newcommand{\ickAx}{\mathrm{iCK\text{-}Ax}}
\newcommand{\IntAx}{\mathrm{Int\text{-}Ax}}
\newcommand{\ikAx}{\mathrm{iK\text{-}Ax}}
\newcommand{\can}{\mathbb{C}_\Ax}
\DeclareMathOperator{\Cn}{Cn}
\newcommand{\says}{\mathrel{\mathtt{says}}}
\newcommand{\Boxi}{\Box_i}
\newcommand{\Boxm}{\Box_m}
\newcommand{\Diamondm}{\Diamond_m}
\newcommand{\ckappa}{\kappa_{\mathrm{caut}}}
\newcommand{\deriv}[1]{\vdash_{\scriptstyle {#1}}}
\newcommand{\myitem}[1]{%
    \renewcommand{\labelenumi}{(\theenumi) }
    \renewcommand{\theenumi}{#1}
    \item}
\newcommand{\ick}{\mathbf{iCK}}
\newcommand{\icc}{\mathbf{iCC}}
\newcommand{\mix}{\mathsf{mix}}
\newcommand{\axKc}{\text{$\mathsf{k_c}$}}
\newcommand{\axNc}{\text{$\mathsf{n_c}$}}
\newcommand{\axMP}{\text{$\mathsf{mp}$}}
\newcommand{\axCS}{\text{$\mathsf{cs}$}}
\newcommand{\axRefl}{\text{\hyperlink{ax:refl}{$\mathsf{id}$}}}
\newcommand{\axCt}{\text{$\mathsf{ct}$}}
\newcommand{\axCm}{\text{$\mathsf{cm}$}}
\newcommand{\axCa}{\text{$\mathsf{ca}$}}
\newcommand{\axF}{\text{\hyperlink{ax:4}{$\mathsf{\oldstylenums{4}_c}$}}}
\newcommand{\axCem}{\text{\hyperlink{ax:cem}{$\mathsf{cem_1}$}}}
\newcommand{\axCemm}{\text{\hyperlink{ax:cem2}{$\mathsf{cem_2}$}}}
\newcommand{\axCemmm}{\text{\hyperlink{ax:cem3}{$\mathsf{cem_3}$}}}
\newcommand{\axEcm}{\text{\hyperlink{ax:ecm}{$\mathsf{ecm_1}$}}}
\newcommand{\axEcmm}{\text{\hyperlink{ax:ecm2}{$\mathsf{ecm_2}$}}}
\newcommand{\axUnit}{\text{\hyperlink{ax:unit}{$\mathsf{unit}$}}}
\newcommand{\axC}{\text{\hyperlink{ax:bt}{$\mathsf{self}$}}}
\newcommand{\axBind}{\text{\hyperlink{ax:bt}{$\mathsf{bind}$}}}
\newcommand{\axCf}{\text{\hyperlink{ax:c4}{$\mathsf{idem}$}}}
\newcommand{\axTrust}{\text{\hyperlink{ax:c4}{$\mathsf{trust}$}}}
\begin{document}

\maketitle

\begin{abstract}
\noindent
We prove completeness results for a wide variety of intuitionistic conditional logics. We do so by first using a canonical model construction obtain completeness with respect to descriptive conditional frames, and then introducing the fill-in method to transfer this to classes of conditional frames without extra structure. The fill-in method closes the gap between descriptive conditional frames, which do not have a canonical underlying frame, and conditional frames.
\end{abstract}

%%%%%%%%%%%%%%%%%%%%%%%%%%%%%%%%%%%%%%%%%%%%%%%%%%%%%%%%%%%%%%%%%%%%%%%%%%%%%%%%
\section{Introduction}

\paragraph{Conditionals and conditional logic}
  Conditional logics have historically arisen from the gap between classical
  (material) implication, and the use of ``implies'' or ``if \ldots then''
  in natural language. The truth-functional interpretation of material implication declares
  an implication true if its antecedent is false or its consequence is true, which
   is often misaligned with our intuition.
  For example, one may deny that
  \medskip
  \begin{center}
    ``if God exists then the prayers of evil men will be answered.''
  \end{center}
  \medskip
  The truth-functional interpretation of implication then leads
  us to ``conclude that God exists, and (as a bonus) we may conclude that the
  prayers of evil men will not be answered''~\cite[Section~II]{Ste70}.
  Other such paradoxes are given by e.g.~MacColl~\cite{Mac08}
  and Lewis~\cite{Lew12}.
  
  The flexibility of natural language allows us to
  use implications to capture a variety of concepts such as causality,
  possibility and future action.
  Each of these relies on a certain \emph{relatedness} to capture the idea
  that when $\phi$ implies $\psi$, then $\phi$ should somehow be related to $\psi$.
  We often describe this by saying that $\psi$ is \emph{conditional} on $\phi$.
  Besides dropping truth-functionality, such conditionals often
  violate other axioms true for material implication.
  Consider for example the statements
  \medskip
  \begin{center}
    ``If I quit my job, then I won't be able to buy a house''
  \end{center}
  \medskip
  and
  \medskip
  \begin{center}
    ``If I win the lottery, then I will quit my job.''
  \end{center}
  \medskip
  Then transitivity would lead us to believe that
  \medskip
  \begin{center}
    ``If I win the lottery, then I won't be able to buy a house,''
  \end{center}
  \medskip
  which would clearly be a dubious conclusion.
  So our notion of a conditional need not be transitive.
  More counterexamples for other axioms can be found in
  e.g.~\cite{Ada65,Ste70,Lew73,Ber22}.
  
  The logical formalisation of non-material implications resulted in a wide
  variety of logics.
  One of the earliest (modern) appearances of a non-truth-functional implication
  is Lewis' \emph{strict implication}~\cite{Lew13,Lew14,Lew18}.%
  \footnote{Discussions about the meaning of implication date back to the Roman Stoics, see e.g.~\cite{Mos24} for references.}
  Two decades later, both Reichenbach and De Finetti adopted the perspective that
  the truth of a conditional should be indeterminate when its antecedent is false,
  leading to three-valued semantics~\cite{Rei35,Fin36,EgrRosSpr21a}.
  The idea that conditionals express a probabilistic dependence between
  their antecedents and their consequents was investigated by Ramsey~\cite{Ram31}
  and others (see e.g.~\cite{Fin36,Ada65,Edg95}).

  The modern landscape of \emph{conditional logic} is often said to have
  started with work on counterfactuals by Stalnaker~\cite{Stal68},
  Lewis~\cite{Lew73} and Adams~\cite{Ada75}.
  Stalnaker captured the idea of relatedness between the antecedent and
  consequent of a conditional using possible world semantics
  with a \emph{selection function} that, given a world $w$ and the
  antecedent $\phi$, selects a world $s(w, \phi)$ which is related or relevant
  to $w$. The conditional $\phi \cto \psi$ is then defined to be true at
  $w$ if $\psi$ is true at $s(w, \phi)$.
  This idea was adapted to allow $s(w, \phi)$ to be a set of worlds,
  rather than a single world~\cite{Che75},
  the idea still being that $s(w, \phi)$ ``selects'' a set of worlds
  related to $\phi$ at $w$.
  This notion of a selection function can also be viewed as having
  a proposition-indexed set of relations~\cite{Seg89},
  with $\phi \cto \psi$ being true at $w$ if all $R_{\phi}$-successors of
  $w$ satisfy $\psi$. Its philosophical interpretation is studied in e.g.~\cite{Unt13}.

  From then on, the study of conditional logic has been an active area
  of research~\cite{Gab85,Vel85,Bou90,CroLam92,WanUnt19,Wei19b},
  with semantic results ranging from the
  finite model property~\cite{Seg89} to
  correspondence theorems~\cite{UntSch14}
  to bisimulations~\cite{BalCin18},
  as well as algebraic~\cite{Nut75,Fra76,Nut79,Nut88,GuzSqu90} and proof-theoretic 
  investigations~\cite{Gir19,GirNegSba19,GriNegOli21,EgrRosSpr21b}.

\paragraph{Conditionals in an intuitionistic setting}
  While the majority of research on conditional logics is carried out
  over a classical propositional basis, some authors have adopted an
  intuitionistic setting.
  Philosophically, some argue that intuitionistic logic is the correct model
  of reasoning (such as~Dummet \cite{Dum84}),
  while from a computer science perspective this brings one to a
  constructive setting.
  For example, in~\cite{LitVis18,LitVis24} intuitionistic logic is
  extended with a Lewis-style strict implication,
  resulting in a logic corresponding e.g.~to type inhabi\-tation of
  Hughes arrows and to an
  intuitionistic epistemic account of entailment.
  Furthermore, various access control logics have an intuitionistic conditional
  flavour (see Example~\ref{exm:access-control}).

  In another recent development, Weiss introduced an intuitionistic counterpart of basic
  conditional logic~\cite{Wei19}.
  %, as defined in~\cite{Che75}.
  This was obtained by
  extending intuitionistic logic with a binary conditional operator that
  satisfies the same axioms and rules as the conditional operator in~\cite{Che75}.
  Semantics for the logic are given by means of intuitionistic
  Kripke frames with a set of relations indexed by sets of worlds.
  Furthermore, Weiss gives G\"{o}del-style and
  Glivenko-style embeddings between intuitionistic conditional logic
  and its classical counterpart, as well as an embedding of 
  intuitionistic modal logic with a box modality into intuitionistic
  conditional logic.
  Slightly modified system of basic intuitionistic conditional logic
  that additionally contain a (diamond-like) conditional possibility operator
  were proposed in~\cite{Olk24} and~\cite{DalGir26}.

  Based on Weiss' groundwork, Ciardelli and Liu provided completeness results for certain
  extensions of basic intuitionistic conditional logic~\cite{Liu18,CiaLiu20}.
  Semantically they deviate slightly from Weiss' work:
  in~\cite{Liu18} the conditional relations are indexed by formulas, rather
  than sets of worlds, while the frames in~\cite{CiaLiu20} carry a collection of
  admissible sets in which formulas must be interpreted (we call such
  structures general conditional frames, see Definition~\ref{def:gf}).
  This move to general frames simplifies the canonical model construction,
  because it prevents the need to define all conditional relations.
  In the current paper, we return to an intuitionistic counterpart
  of the semantics used by Chellas and Segerberg.

\paragraph{Our contributions}
  The goal of this paper is to prove \emph{Kripke completeness} for
  intuitionistic conditional logics. That is, we prove completeness 
  with respect to relational semantics
  that do not rely on topology or a set of admissible subsets.
  To this end, we first recall a canonical model construction from~\cite{CiaLiu20}
  that gives rise to strong completeness of any axiomatic extension of
  intuitionistic conditional logic with respect to classes of
  general conditional frames.
  In such frames, the selection function is only defined on admissible upsets.
  Therefore, simply deleting the collection of admissible sets leaves
  ``gaps'' in the selection function.
  
  Thus, the challenge is to extend a selection function that is only defined
  on admissible upsets to all upsets without losing validity of the axioms. 
  We investigate two ways of doing so.
  The first way extends a selection function by assigning value $\emptyset$ whenever it is undefined.
  Axioms that preserve validity under this move are called
  \emph{$\kappa_{\emptyset}$-persistent}, in analogy with \emph{d-persistence}
  in normal modal logic (see e.g.~\cite[Section~2]{WolZak99}
  and \cite[Definition~5.84]{BlaRijVen01}).
  We exploit a connection with intuitionistic normal modal logic to
  establish $\kappa_{\emptyset}$-persistence for a wide variety of axioms,
  resulting in Kripke completeness for extensions of intuitionistic
  conditional logic with any collection of such axioms.

  But not all axioms are $\kappa_{\emptyset}$-persistent.
  In particular, this is the case for the extension of intuitionistic
  conditional logic with \emph{cautious} analogues of
  identity, transitivity and monotonicity that are widely used in the
  conditional logic literature.
  In order to capture them, we propose a second
  way to fill in the gaps: the \emph{cautious fill-in}. This is an example of a more intricate
  fill-in definition, and gives rise to Kripke completeness for several
  more extensions of intuitionistic conditional logic including the
  cautious axioms.
  After having seen these two examples of fill-ins, we briefly
  discuss several other possible fill-ins and the types of axioms
  they accommodate.

%%%%%%%%%%%%%%%%%%%%%%%%%%%%%%%%%%%%%%%%%%%%%%%%%%%%%%%%%%%%%%%%%%%%%%%%%%%%%%%%
\section{Intuitionistic normal modal logic}

  We briefly recall basic notions of intuitionistic logic and
  intuitionistic normal modal logic.

%==============================================================================%
\subsection{Intuitionistic logic}

  Let $\lan{L}$ be the language given by the grammar
  \begin{equation*}
    \phi ::= p
      \mid \bot
      \mid \phi \wedge \phi
      \mid \phi \vee \phi
      \mid \phi \to \phi,
  \end{equation*}
  where $p$ ranges over some arbitrary but fixed set $\Prop$ of proposition
  letters. A \emph{consecution} is an expression of the form
  $\Gamma \vdash \phi$, where $\Gamma \cup \{ \phi \} \subseteq \lan{L}$.
  Let $\IntAx$ consist of the following axioms,
  \begin{align*}
    &(p \wedge q) \to p
    &&(p \wedge q) \to q
    &&(p \to q) \to ((p \to r) \to (p \to (q \wedge r))) \\
    &p \to (p \vee q)
    &&q \to (p \vee q)
    &&(p \to r) \to ((q \to r) \to ((p \vee q) \to r)) \\
    &\bot \to p
    &&p \to (q \to p)
    &&(p \to (q \to r)) \to ((p \to q) \to (p \to r))
  \end{align*}
  and write $\mc{I}(\IntAx)$ for the set of substitution instances of axioms in $\IntAx$.
  Define the axiomatic system $\log{Int}$ by
  \begin{multicols}{3}%
  \begin{enumerate}
    \setlength{\itemindent}{1.1em}
    \renewcommand{\labelenumi}{(\theenumi) }
    \renewcommand{\theenumi}{$\mathsf{Ax}$}
    \item \label{rule:Ax-int}
          $\dfrac{\phi\in\mathcal{I}(\IntAx)}{\Gamma \vdash \phi}$
    \renewcommand{\theenumi}{$\mathsf{MP}$}
    \item \label{rule:MP-int}
          $\dfrac{\Gamma \vdash \phi \qquad \Gamma \vdash \phi \to \psi}{\Gamma \vdash \psi}$
    \renewcommand{\theenumi}{$\mathsf{El}$}
    \item \label{rule:El-int}
          $\dfrac{\phi \in \Gamma}{\Gamma \vdash \phi}$
  \end{enumerate}
  \end{multicols}
  \noindent
  We say that $\Gamma \vdash \phi$ is \emph{provable in} $\log{Int}$,
  and write $\Gamma \vdash_{\mathrm{Int}} \phi$,
  if there exists a tree of consecutions built using the rules above
  with $\Gamma \vdash \phi$ as root and adequate applications of \eqref{rule:El-int} and \eqref{rule:Ax-int} as leaves.
  
  An \emph{(intuitionistic) Kripke frame} is a nonempty poset,
  i.e.~a nonempty set with a reflexive, antisymmetric and transitive binary relation.
  If $(X, \leq)$ is a poset set and $a \subseteq X$, then we define
  ${\uparrow}a := \{ x \in X \mid y \leq x \text{ for some } y \in a \}$.
  We say that $a$ is \emph{upward closed} or an \emph{upset} if $a = {\uparrow}a$,
  and define $\up(X, \leq)$ as the collection of upsets of $(X, \leq)$.
  Similarly, we define ${\downarrow}a$ and downsets.
  For $x \in X$, we abbreviate ${\uparrow}x = {\uparrow} \{ x \}$.

\begin{definition}\label{def:int-interpretation}
  A \emph{Kripke model} is a tuple $\mf{M} = (X, \leq, V)$ consisting of a
  Kripke frame $(X, \leq)$ and a valuation $V : \Prop \to \up(X, \leq)$.
  The interpretation of $\phi \in \lan{L}$ at a world $x$ in $\mf{M}$
  is defined recursively by:
  \begin{align*}
    \mf{M}, x \models p &\iff x \in V(p) \\
    \mf{M}, x \models \bot &\phantom{\iff} \text{never} \\
    \mf{M}, x \models \phi \wedge \psi &\iff \mf{M}, x \models \phi \text{ and } \mf{M}, x \models \psi \\
    \mf{M}, x \models \phi \vee \psi &\iff \mf{M}, x \models \phi \text{ or } \mf{M}, x \models \psi \\
    \mf{M}, x \models \phi \to \psi &\iff \forall y \in X (\text{if } x \leq y \text{ and }\mf{M}, y \models \phi \text{ then } \mf{M}, y \models \psi)
  \end{align*}
  The \emph{truth set} of $\phi$
  is denoted by $V(\phi) = \{ x \in X \mid \mf{M}, x \models \phi \}$.
\end{definition}

  The Kripke frames and models we use in this paper are also called
  \emph{intuitionistic} Kripke frames and models. Since we deal exclusively
  with intuitionistic frames and models, we drop the prefix ``intuitionistic.''
  Sometimes it is useful to restrict denotations of formulas to a
  predefined collection of upsets:
  
\begin{definition}
  A \emph{general Kripke frame} is a tuple $\mb{X} = (X, \leq, A)$
  consisting of a Kripke frame $(X, \leq)$ and a set $A \subseteq \up(X, \leq)$
  that contains $\emptyset$ and $X$, and is closed under binary intersections
  and unions and under the operation
  \begin{equation*}
    {\Rightarrow}
      : \up(X, \leq) \times \up(X, \leq) \to \up(X, \leq)
      : (a, b) \mapsto \{ x \in X \mid {\uparrow}x \cap a \subseteq b \}.
  \end{equation*}
  (In other words, if $a, b \in A$ then $a \Rightarrow b \in A$.)
  The members of $A$ are called \emph{admissible} upsets of $\mb{X}$.
  
  An \emph{admissible valuation} is a function $V : \Prop \to A$ that assigns
  to each proposition letter an admissible upset.
  A \emph{general Kripke model} is a general Kripke frame together with an
  admissible valuation. Truth of formulas in a general Kripke model
  is defined as in Definition~\ref{def:int-interpretation}.
\end{definition}

  Restricting the class of general Kripke frames further, to so-called
  descriptive Kripke frames, gives rise to a duality
  for the category of Heyting algebras, the algebraic semantics of intuitionistic
  logic~\cite{Esa74,Esa19}. For the purpose of this paper, it suffices to know
  the definition of a descriptive Kripke frame.
  If $(X, \leq, A)$ is a general Kripke frame, then we write
  $-A = \{ X \setminus a \mid a \in A \}$ for the complements of the members of $A$.

\begin{definition}
  A \emph{descriptive Kripke frame} is a general Kripke frame $\mf{X} = (X, \leq, A)$
  that additionally satisfies
  \begin{enumerate}\itemindent=1.1em
    \myitem{D$_{\mathrm{cpt}}$} \label{it:d-cpt}
          if $\mc{B} \subseteq A \cup -A$ has the finite intersection property,
          then $(\bigcap \mc{B}) \neq \emptyset$;
    \myitem{D$_{\to}$} \label{it:d-to}
          ${\uparrow}x = \bigcap \{ a \in A \mid x \in a \}$ for every $x \in X$.
  \end{enumerate}
  \emph{Descriptive Kripke models} are general Kripke models based on a
  descriptive Kripke frame.
\end{definition}

%==============================================================================%
\subsection{Adding a normal modality}

  The language $\lan{L}_{\Box}$ is obtained by extending the grammar of
  $\lan{L}$ with a unary operator $\Box$, i.e.~it is given by
  \begin{equation*}
    \phi ::= p
        \mid \bot 
        \mid \phi \wedge \phi
        \mid \phi \vee \phi
        \mid \phi \to \phi
        \mid \Box\phi,
  \end{equation*}
  where $p$ ranges over some set $\Prop$ of proposition letters.

\begin{definition}
  For $\Ax \subseteq \lan{L}_{\Box}$, define the axiomatic system
  $\log{iK} \oplus \Ax$ by
  \begin{equation*}
    \mathsf{(Ax)} \;\;
          \dfrac{\phi\in\mathcal{I}(\ikAx) \cup \mc{I}(\Ax)}{\Gamma \vdash \phi} \qquad\quad
    \mathsf{(MP)} \;\;
          \dfrac{\Gamma \vdash \phi \qquad \Gamma \vdash \phi \to \psi}{\Gamma \vdash \psi} \qquad\quad
    \mathsf{(El)} \;\;
           \dfrac{\phi \in \Gamma}{\Gamma \vdash \phi} \qquad\quad
    \mathsf{(Nec)} \;\;
      \dfrac{\emptyset \vdash \phi}{\Gamma \vdash \Box\phi}
  \end{equation*}
  where $\mc{I}(\ikAx)$ consists of all substitution instances of the formulas in $\IntAx$ and the axiom
  $\Box(p \wedge q) \leftrightarrow \Box p \wedge \Box q$,
  and $\mc{I}(\Ax)$ contains all substitution instances of the formulas in $\Ax$.
  We abbreviate $\mathbf{iK} := \mathbf{iK} \oplus \emptyset$.
\end{definition}

  The language $\lan{L}_{\Box}$ can be interpreted in relational structures
  that we will call \emph{modal frames}.

\begin{definition}
  A \emph{modal (Kripke) frame} is a tuple $(X, \leq, R)$ consisting of a
  poset $(X, \leq)$ and a binary relation $R$ on $X$ such that
  $({\leq} \circ R \circ {\leq}) = R$.
  A \emph{modal model} is a tuple $\mf{M} = (X, \leq, R, V)$ consisting of
  a modal frame $(X, \leq, R)$ and a valuation $V$,
  and $\lan{L}_{\Box}$-formulas can be interpreted in modal
  models via the clauses from Definition~\ref{def:int-interpretation} and
  \begin{equation*}
    \mf{M}, x \models \Box\phi \iff \forall y \in X (x R y \text{ implies } \mf{M}, y \models \phi).
  \end{equation*}
  We write $R[x] := \{ y \in X \mid xRy \}$,
  so $x \models \Box\phi$ iff $R[x] \subseteq V(\phi)$.
\end{definition}

\begin{definition}
  A \emph{general modal frame} is a tuple $(X, \leq, R, A)$ such that
  $(X, \leq, R)$ is a modal frame, $(X, \leq, A)$ is a general Kripke frame,
  and for every $a \in A$ we have $\boxdot a := \{ x \in X \mid R[x] \subseteq a \} \in A$.
  It is called a \emph{descriptive modal frame} if additionally $(X, \leq, A)$ is a
  descriptive Kripke frame and
  \begin{enumerate}\itemindent=1.1em
    \renewcommand{\labelenumi}{(\theenumi) }
    \renewcommand{\theenumi}{D$_{\Box}$}
    \item $R[x] = \bigcap \{ a \in A \mid R[x] \subseteq a \}$ for all $x \in X$.
  \end{enumerate}
\end{definition}

  General and descriptive modal models are defined as expected.
  The following notion of \emph{d-persistence} is used
  to derive completeness results for extensions of $\mathbf{iK}$~\cite{WolZak98}.

\begin{definition}
  If $\mb{X} = (X, \leq, R, A)$ is a descriptive modal frame, then 
  $\kappa\mb{X} = (X, \leq, R)$ denotes the \emph{underlying modal frame}.
  A formula $\phi \in \lan{L}_{\Box}$ is called \emph{d-persistent}
  if $\mb{X} \models \phi$ implies $\kappa\mb{X} \models \phi$ for ever
  descriptive modal frame~$\mb{X}$.
\end{definition}

  One way of proving d-persistence is via the G\"{o}del-McKinsey-Tarski (GMT) translation
  from $\lan{L}_{\Box}$ into a classical bimodal logic.
  Let $\lan{L}_{\Boxi\Boxm}$ be the extension of classical propositional logic
  with modal operators $\Boxi$ and $\Boxm$. Then the GMT translation given by
  \begin{align*}
    t(p) &= \Boxi p &\text{for $p \in \Prop \cup \{ \bot \}$} \\
    t(\phi \star \psi) &= \Boxi(t(\phi) \star t(\psi))
      &\text{for $\star \in \{ \wedge, \vee \to \}$} \\
    t(\Box\phi) &= \Boxi\Boxm t(\phi)
  \end{align*}
  It then follows from~\cite{WolZak97,WolZak98} that an $\lan{L}_{\Box}$-formula
  $\phi$ is d-persistent whenever its translation into $\lan{L}_{\Boxi\Boxm}$
  is Sahlqvist:
  
\begin{theorem}\label{thm:d-pers-Sahl}
  Suppose $\phi \in \lan{L}_{\Box}$ is a formula whose
  G\"odel translation $t(\phi)$ is Sahlqvist. Then $\phi$ is d-persistent.
\end{theorem}
\begin{proof}[Proof sketch]
  The axiom $\mix$ is defined by $\Boxi\Boxm\Boxi q \leftrightarrow \Boxm q$.
  By~\cite[Theorem~9]{WolZak98} the logic
  $(\log{S4} \otimes \log{K}) \oplus t(\phi) \oplus \mix$ is a
  so-called \emph{modal companion} of $\log{iK} \oplus \phi$,
  and by \cite[Theorem~12]{WolZak98} the latter is d-persistent if and only
  if its modal companion is (in the classical sense of~\cite[Definition~5.84]{BlaRijVen01}).
  Since every Sahlqvist formula is d-persistent, the assumption
  together with the fact that $\mix$ and the S4-axioms are d-persistent
  implies that $\phi$ is d-persistent.
\end{proof}

  For more complicated formulas one can use (an implementation of) the
  SQEMA algorithm~\cite{ConGorVak06} to verify whether or not $t(\phi)$
  is d-persistent or not.

%%%%%%%%%%%%%%%%%%%%%%%%%%%%%%%%%%%%%%%%%%%%%%%%%%%%%%%%%%%%%%%%%%%%%%%%%%%%%%%%
\section{Intuitionistic conditional logic}

%==============================================================================%
\subsection{The axioms and rules of intuitionistic conditional logic}

  Let $\mathcal{L}_{\cto}$ be the language generated by the grammar
  \begin{equation*}
    \phi ::= p \mid \bot \mid \phi \wedge \phi \mid \phi \vee \phi \mid \phi \to \phi \mid \phi \cto \phi,
  \end{equation*}
  where $p \in \Prop$.
  We abbreviate $\neg\phi := \phi \to \bot$, $\top = \neg\bot$
  and $(\phi \leftrightarrow \psi) := (\phi \to \psi) \wedge (\psi \to \phi)$.
  Hencefort, \emph{consecutions} are expressions $\Gamma \vdash \phi$ such that
  $\Gamma \cup \{ \phi \} \subseteq \lan{L}_{\cto}$.

  The minimal conditional modality studied in the literature is a binary operator
  that satisfies the congruence rule for both arguments, i.e.~we can
  replace equals with equals, and preserves finite meets in its second argument.
  Intuitionistic conditional logic~\cite{Wei19} is obtained by extending
  intuitionistic logic with such a modality.
  In the literature, $\cto$ is sometimes denoted by
  $\Rightarrow$~\cite{Che75,Che80}, $\sqsupset$~\cite{Seg89},
  $>$~\cite{Nut88,CiaLiu20} and $\rightsquigarrow$~\cite{BalCin18}.

\begin{definition}
  Let $\ickAx$ be an axiomatisation of intuitionistic logic
  together with the axioms
  \begin{multicols}{2}
  \begin{enumerate}\itemindent=1.1em
	\item[(\axKc)] $(p \cto (q \land r)) \leftrightarrow ((p \cto q) \land (p \cto r))$
	\item[(\axNc)] $(p \cto \top)$
  \end{enumerate}
  \end{multicols}
  \noindent
  For a set of formulas $\Lambda$, we write $\mathcal{I}(\Lambda)$ for the collection of substitution instances of formulas in $\Lambda$.
  For a set $\Ax \subseteq \mathbf{L}$,
  define the axiomatic system $\log{CK \oplus \Ax}$ by:
  \medskip
  \begin{multicols}{3}%
  \begin{enumerate}
    \setlength{\itemindent}{1.1em}
    \renewcommand{\labelenumi}{(\theenumi) }
    \renewcommand{\theenumi}{$\mathsf{Ax}$}
    \item \label{rule:Ax}
          $\dfrac{\phi\in\mathcal{I}(\ickAx)\cup \mathcal I (\Ax)}{\Gamma \vdash \phi}$
    \renewcommand{\theenumi}{$\mathsf{MP}$}
    \item \label{rule:MP}
          $\dfrac{\Gamma \vdash \phi \qquad \Gamma \vdash \phi \to \psi}{\Gamma \vdash \psi}$
    \renewcommand{\theenumi}{$\mathsf{El}$}
    \item \label{rule:El}
          $\dfrac{\phi \in \Gamma}{\Gamma \vdash \phi}$
  \end{enumerate}
  \end{multicols}
  \begin{multicols}{2}
  \begin{enumerate}
    \setlength{\itemindent}{1.1em}
    \renewcommand{\labelenumi}{(\theenumi) }
    \renewcommand{\theenumi}{$\mathsf{Ax}$}
    \renewcommand{\theenumi}{$\mathsf{Rcea}$}
    \item \label{rule:Rcea}
          $\dfrac{\emptyset \vdash \phi \leftrightarrow \psi}
                 {\Gamma \vdash (\phi \cto \chi) \leftrightarrow (\psi \cto \chi)}$
    \renewcommand{\theenumi}{$\mathsf{Rcec}$}
    \item \label{rule:Rcec}
          $\dfrac{\emptyset \vdash \phi \leftrightarrow \psi}
                 {\Gamma \vdash (\chi \cto \phi) \leftrightarrow (\chi \cto \psi)}$
  \end{enumerate}
  \end{multicols}
  
\medskip\noindent
  Provability of $\Gamma \vdash \phi$ in $\ick \oplus \Ax$ is defined as expected, and denoted by $\Gamma \deriv{\Ax} \phi$.

  We write
  $\ick \oplus \phi_1 \oplus \dots \oplus \phi_n$ for $\ick \oplus \{ \phi_1, \ldots, \phi_n \}$,
  and we write $\Gamma \vdash_{\Ax} \Delta$ if there exists 
  $\psi_1, \ldots, \psi_m \in \Delta$ such that
  $\Gamma \vdash_{\Ax} \psi_1 \vee \cdots \vee \psi_m$.
\end{definition}

\begin{proposition}
  The following rules, where $\sigma$ is a uniform substitution, are admissible
  in $\ick \oplus \Ax$ for any set of axioms $\Ax$:
  \begin{equation*}
    %% weakening
    \infer{\Gamma, \Gamma' \vdash \phi}
          {\Gamma \vdash \phi}
    \qquad
    \infer{\Gamma \vdash \phi}
          {\{\Gamma\vdash\delta \; \mid \; \delta\in\Delta\} \quad \Delta\vdash\varphi}
    \qquad
	\infer{\Gamma^\sigma\vdash\varphi^\sigma}
	      {\Gamma\vdash\varphi}
	\qquad
	%% deduction-detachment
    \infer={\Gamma\vdash\phi\rightarrow\psi}
          {\Gamma,\phi\vdash\psi}
  \end{equation*}
\end{proposition}
\begin{proof}
  By induction on the height of a derivation for the premiss(es).
\end{proof}

  The three topmost rules show that $\ick \oplus \Ax$ is a monotone,
  compositional and structural relation, respectively.
  Furthermore, using the fact that proof trees are finite
  we can show that $\Gamma \deriv{\Ax} \phi$ if and only if
  there is a finite $\Gamma' \subseteq \Gamma$ such that $\Gamma' \deriv{\Ax} \phi$.
  Therefore $\ick \oplus \Ax$ is a finitary logic~\cite[Definition~1.4.1]{Kra99}.

\begin{example}\label{exm:access-control}
  In computer systems, access control is concerned with the decision of
  accepting or denying a request from a user or program to perform an operation
  on e.g.~a file.
  To reason about such systems, they are often modelled by logics with a
  conditional operator denoted $\cto$ (also denoted by by ``$\says$''),
  where $\phi \cto \psi$ is read as
  ``$\phi$ asserts or supports $\psi$''~\cite{AbaBurLamPlo93,LiGroFei03,BoeGabGenTor09}.
  Sometimes the antecedent is restricted to a fixed set of proposition
  letters denoting agents of the system.
  To prevent the derivability of unwanted conclusions,
  it has been proposed that such logics should be built on an
  intuitionistic basis~\cite{GarPfe06,Aba08,GarAba08,GenGioGliPoz14}.
  Common additional axioms include:
  \begin{multicols}{2}
  \begin{enumerate}\itemindent=1.1em \itemsep=0em
    \item[(\axUnit)] $q \to (p \cto q)$
    \item[(\axTrust)] $(\bot \cto q) \to q$
    \item[(\axBind)] $(q \to (p \cto r)) \to ((p \cto q) \to (p \cto r))$
    \item[(\axCf)] $(p \cto (p \cto q)) \to (p \cto q)$
    \item[(\axC)] $p \cto ((p \cto q) \to q)$
    \item[]
  \end{enumerate}
  \end{multicols}
  \noindent
  The right two, for instance, can be read as follows:
  $\axC$ states that $p$ says whatever it proclaims true must be true,
  and $\axCf$ states that $p$ says what it claims to say.
  There are many variations of access control logic, modifying or extending
  the ones discussed above, see e.g.~\cite{DeT02,Aba07,GenGioGliPoz14}.
\end{example}

\begin{example}\label{exm:icc}
  As discussed in the introduction, axioms such as monotonicity and
  transitivity are often too strong for conditional reasoning.
  However, there is a set of weaker forms of these axioms that is widely
  accepted, namely:
  \begin{enumerate}\itemindent=1.1em \itemsep=0em
    \item[(\axCt)] $(p \cto q) \wedge ((p \wedge q) \cto r) \to (p \cto r)$
           \hfill (\emph{cautious transitivity})
    \item[(\axCm)] $(p \cto q) \wedge (p \cto r) \to ((p \wedge q) \cto r)$
		   \hfill (\emph{cautious monotonicity})
  \end{enumerate}
  Both axioms are mentioned is Chellas' \emph{basic conditional logic}~\cite{Che75},
  and they are included in a wide variety of conditional logics,
  see e.g.~\cite{Ada75,Gab85,KraLehMag90,Ben03,PfeKle10,Bad21},
  as well as topic-sensitive modals~\cite{Ber18b,Bad21,Ber22,OzgCot25}.
  A third cautious axiom sometimes found in the literature~\cite{HawOzg23}
  is:
  \begin{enumerate}\itemindent=1.1em
    \item[(\axCa)] $(p \cto q) \to (p \cto (p \wedge q))$
          \hfill(\emph{cautious agglomeration})
  \end{enumerate}
  Since over $\ick$ this is equivalent to \axRefl,
  we favour the simpler formulation of $\axRefl$.

  In the context of imagination, the cautious axioms are often assumed
  to be true~\cite{Bad21,Ber22,Ferguson_2024,OzgunBerto_2021}.
  As a minimal intuitionistic counterpart, we define a the logic
  of the \textbf{i}ntuitionistic \textbf{c}autious \textbf{c}onditional by
  \begin{equation*}
    \log{iCC} := \ick \oplus {\axRefl} \oplus \axCt \oplus \axCm.
  \end{equation*}
\end{example}

%==============================================================================%
\subsection{Frames}

One mechanism for interpreting the conditional is via a
  \emph{selection function}. In the classical setting, a selection function
  for a nonempty set $X$ is a function $s : X \times \power(X) \to \power(X)$,
  where $\power(X)$ denotes the powerset of $X$.
  The intuition is that given a world $x$ and
  a proposition $a \subseteq X$ such a function selects the set of worlds relevant to $a$ at $x$.
  A formula of the form $\phi \cto \psi$ is true at $x$ if
  every world selected by $s$ on inputs $x$ and $V(\phi)$
  satisfies $\psi$.
  A selection function can also be encoded as a ternary relation
  $T \subseteq X \times \power(X) \times X$ (used in e.g.~\cite{UntSch14})
  or as a collection of relations $\mc{R} = \{ R_a \mid a \in \mc{P}(X) \}$
  (as in~\cite{Seg89}). The latter perspective aligns with the intuition
  that the conditional implication $\phi \cto \psi$ behaves as a box modality
  $[\phi]\psi$ indexed by its first argument.
  
  The simplest approach to turn this intuitionistic is to define an frames as triples
  $(X, \leq, s)$ consisting of an intuitionistic Kripke frame $(X, \leq)$
  and a classical selection function $s : X \times \power(X) \to \power(X)$~\cite{Wei19}.
  However, since denotations of formulas are given by upsets,
  a selection function of the form $s : X \times \up(X, \leq) \to \up(X, \leq)$
  suffices. 

\begin{definition}\label{def:cond-frm}
  Let $(X, \leq)$ be a Kripke frame. An \emph{intuitionistic selection function}
  on $(X, \leq)$ is a function
  \begin{equation*}
    s : X \times \up(X, \leq) \to \up(X, \leq)
  \end{equation*}
  such that $x \leq y$ implies $s(y, u) \subseteq s(x, u)$ for all
  $x, y \in X$ and $u \in \up(X, \leq)$.
  A \emph{conditional frame} is a a tuple $(X, \leq, s)$ consisting of
  a Kripke frame $(X, \leq)$ and an intuitionstic selection function $s$.
\end{definition}

  Models are defined by extending a conditional frame with a valuation.

\begin{definition}
  A \emph{conditional model} is a pair $\mf{M} = (\mf{F}, V)$ such that
  $\mf{F} = (X, \leq, s)$ is a conditional frame and $V : \Prop \to \up(X, \leq)$ is
  a valuation of the proposition letters. Satisfaction of a formula
  $\phi \in \lan{L}_{\cto}$ at a world $x$ is defined recursively via the
  clauses from Definition~\ref{def:int-interpretation} and
  \begin{equation*}
    \mf{M}, x \models \phi \cto \psi
      \iff s(x, V(\phi)) \subseteq V(\psi),
  \end{equation*}
  where $V(\phi) := \{ x \in X \mid \mf{M}, x \models \phi \}$ is the
  \emph{truth set} of $\phi$.

  Let $\Gamma \cup \{ \phi \} \subseteq \lan{L}_{\cto}$
  and let $\mf{M}$ be a conditional model.
  We write $\mf{M}, x \models \Gamma$ if $x$ satisfies all $\psi \in \Gamma$,
  and we say that $\mf{M}$ \emph{validates} $\Gamma \vdash \phi$ if
  $\mf{M}, x \models \Gamma$ implies $\mf{M}, x \models \phi$ for all worlds
  $x$ in $\mf{M}$.
  A conditional frame $\mf{X}$ \emph{validates} $\Gamma \vdash \phi$ if every model
  of the form $(\mf{X}, V)$ validates the consecution, and it validates a
  formula $\phi$ if it validates the consecution $\emptyset \vdash \phi$.
  If $\Ax \subseteq\lan{L}_{\cto}$ is a set of axioms, then
  we write $\Gamma \models_{\Ax} \phi$,
  and say that \emph{$\Gamma$ semantically entails $\phi$ on the class of
  conditional frames for $\ick \oplus \Ax$}, if every conditional frame that
  validates all formulas in $\Ax$ also validates the consecution $\Gamma \vdash \phi$.
\end{definition}

  In~\cite{CiaLiu20}, the frames used to interpret intuitionistic conditional
  logic are equipped with a collection of \emph{admissible}
  upsets. These correspond to what we call \emph{general frames}:

\begin{definition}\label{def:gf}
  Let $(X, \leq, A)$ be a general Kripke frame.
  A \emph{general selection function} on $(X, \leq, A)$ is a function
  \begin{equation*}
    s : X \times A \to \up(X, \leq)
  \end{equation*}
  such that $x \leq y$ implies $s(y, a) \subseteq s(x, a)$
  for all $x, y \in X$ and $a \in A$.
  A \emph{general conditional frame} is a tuple
  $(X, \leq, s, A)$ such that $(X, \leq, A)$ is a general Kripke frame,
  $s$ is a general selection function, and $A$ is closed under:
  \begin{equation*}
    \dto
      : \up(X, \leq) \times \up(X, \leq) \to \up(X, \leq)
      : (a, b) \mapsto \{ x \in X \mid s(x, a) \subseteq b \}
  \end{equation*}

  A \emph{general conditional model} is a general conditional frame $\mb{X} = (X, \leq, s, A)$
  together with an admissible valuation, i.e.~a valuation that assigns
  to each proposition letter $p$ and upset $V(p) \in A$.
  A general conditional frame $\mb{X}$ \emph{validates} a consecution
  $\Gamma \vdash \phi$ if every general conditional model of the form
  $(\mb{X}, V)$ validates $\Gamma \vdash \phi$.

  If $\Ax \subseteq\lan{L}_{\cto}$ is a set of axioms, then
  we write $\Gamma \models_{\Ax}^g \phi$
  if every general conditional frame that validates all formulas in $\Ax$
  also validates the consecution $\Gamma \vdash \phi$.
\end{definition}
  
  Conditional frames (and models) correspond bijectively with 
  general conditional frames (models) in which $A$ is the collection
  of all upsets. Such general conditional frames are called \emph{full}.
  Using this identification, we can transfer propositions that apply
  to general conditional frames to the class of conditional frames.
  For instance, if we specify the following proposition to full
  general conditional frames it states the usual results that
  denotations of formulas in conditional models are always given by upsets.
  
\begin{proposition}
  Let $\mb{X} = (X, \leq, s, A)$ be a general conditional frame and
  $\mb{M} = (\mb{X}, V)$ a general conditional model.
  Then $V(\phi)$ is an upset of $(X, \leq)$ for every $\phi \in \lan{L}_{\cto}$.
\end{proposition}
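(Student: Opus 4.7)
The plan is to proceed by induction on the complexity of $\phi \in \lan{L}_{\cto}$, showing at each step that the truth set is closed under $\leq$.

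The base cases are immediate: $V(p)$ is an upset because valuations take values in $\up(X, \leq)$, and $V(\bot) = \emptyset$ is trivially upward closed. For the propositional connectives $\wedge$ and $\vee$, I would observe that $V(\psi \wedge \chi) = V(\psi) \cap V(\chi)$ and $V(\psi \vee \chi) = V(\psi) \cup V(\chi)$, and both intersections and unions of upsets are upsets. For $\psi \to \chi$, the argument is the standard intuitionistic one: given $x \in V(\psi \to \chi)$ and $x \leq y$, for any $z \geq y$ with $z \models \psi$ transitivity of $\leq$ yields $x \leq z$, whence $z \models \chi$ by assumption.

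The key case, and the only one requiring the coherence condition \eqref{eq:ckf-condition}, is $\phi = \psi \cto \chi$. Here the induction hypothesis tells us that $V(\psi)$ and $V(\chi)$ are both upsets, which (crucially) is what ensures that $R_{V(\psi)}$ is actually a relation in $\mc{R}$, so that the semantic clause for $\cto$ is meaningful. Suppose $x \in V(\psi \cto \chi)$ and $x \leq y$; I want $y \in V(\psi \cto \chi)$. Take any $z$ with $y R_{V(\psi)} z$. Then $(x, z) \in {\leq} \circ R_{V(\psi)}$, so by \eqref{eq:ckf-condition} there exists $w \in X$ with $x R_{V(\psi)} w$ and $w \leq z$. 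Since $x \models \psi \cto \chi$, we obtain $w \models \chi$, and then the inductive hypothesis applied to $\chi$ (which gives that $V(\chi)$ is an upset) yields $z \models \chi$. Hence $y \models \psi \cto \chi$.

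The main subtlety is not in any calculation but in the interplay between two inductive facts in the $\cto$-case: the hypothesis on $\psi$ guarantees that $R_{V(\psi)}$ belongs to the indexed family $\mc{R}$ in the first place, while the hypothesis on $\chi$ is what makes the final upward-closure step go through, and the coherence condition \eqref{eq:ckf-condition} bridges the two. This is precisely why \eqref{eq:ckf-condition} is built into Definition~\ref{def:cond-frm}, and once this observation is in place the proof is routine.
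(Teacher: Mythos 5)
Your proof is correct and is exactly the routine induction the paper alludes to (the paper itself omits the details, merely noting the result follows by induction): the coherence condition \eqref{eq:ckf-condition} is applied precisely as intended in the $\cto$-case, and your observation that the induction hypothesis on the antecedent is needed for $R_{V(\psi)}$ to be defined matches the paper's remark that the proposition ensures the interpretation of $\cto$ is well defined. No gaps.
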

\begin{proof}
  This follows from a routine induction on the structure of $\phi$,
  similar to~\cite[Proposition~1]{CiaLiu20}.
\end{proof}

  In order to transfer results from intuitionistic normal modal logic
  to intuitionistic conditional logic, we use the following notion of
  a descriptive conditional frame.
  While beyond the scope of this paper, we note that this can be
  used to give a duality for the algebraic semantics of $\ick$,
  which appeared in disguise in~\cite[Section~12.4]{Gro22-phd}.

\begin{definition}
  A \emph{descriptive conditional frame} is a general conditional frame
  $(X, \leq, s, A)$ such that $(X, \leq, A)$ is a descriptive Kripke frame
  and:
  \begin{enumerate}\itemindent=1.1em
    \myitem{D$_{\cto}$} \label{it:d-cto}
          $s(x, a) = \bigcap \{ b \in A \mid s(x, a) \subseteq b \}$
          for every $x \in X$ and $a \in A$.
  \end{enumerate}
  We write $\Gamma \models_{\Ax}^d \phi$
  if every descriptive conditional frame that validates all formulas in $\Ax$
  also validates $\Gamma \vdash \phi$.
\end{definition}
  
  The next example of a descriptive conditional frame will be used in
  Section~\ref{sec:fill-in} to prove non-persistency results.

\begin{example}\label{exm:descr}
  Let $N$ be the set of natural numbers, $N_{\infty} = N \cup \{ \infty \}$, and
  let
  \begin{equation*}
    A = \{ a \subseteq N \mid a \text{ is finite} \} 
      \cup \{ a \cup \{ \infty \} \mid (X \setminus a) \subseteq N \text{ is infinite} \}.
  \end{equation*}
  Then $(N_{\infty}, =, A)$ is a descriptive Kripke frame.
  Define
  \begin{equation*}
    s : X \times A \to \up(X, \leq)
      : (x, a) \mapsto \{ y \in a \mid y = \infty \text{ or } y \text{ is even} \}.
  \end{equation*}
  Since the definition of $s(x, a)$ does not depend on $x$,
  for any $a, b \in A$ we have $a \dto b = \emptyset$ or $a \dto b = N_{\infty}$,
  both of which are admissible. Moreover, it is easy to verify that
  $s(x, a) = \bigcap \{ b \in A \mid s(x, a) \subseteq b \}$ for any
  $x \in N_{\infty}$ and $a \in A$.
  So $(N_{\infty}, =, s, A)$ is a descriptive conditional frame.  
%  \jim{Maybe we can elaborate a bit more here.}
\end{example}

  For future reference we prove a correspondence result for the logic
  $\icc$ from Example~\ref{exm:icc}.

\begin{lemma}\label{lem:sick-fr-corr}
  A full or descriptive conditional frame $\mb{X} = (X, \leq, s, A)$
  validates $\icc$ if and only if it satisfies:
  \begin{enumerate}
  \setlength{\itemindent}{3em}
    \renewcommand{\labelenumi}{\textup{(}\theenumi\textup{)} }
    \renewcommand{\theenumi}{$\mathsf{id}$-corr}
    \item \label{ax:refl-corr}
          $s(x, a) \subseteq a$, for all $x \in X$ and $a \in A$;
    \renewcommand{\theenumi}{$\mathsf{icc}$-corr}
    \item \label{ax:sick-corr}
          $s(x, a) \subseteq b \subseteq a$ implies $s(x, a) = s(x, b)$, for all $x \in X$ and $a, b \in A$.
  \end{enumerate}
\end{lemma}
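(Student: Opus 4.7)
The plan is to prove the two directions separately, leaning on the fact that $\icc$ differs from $\ick$ only by the addition of $\axRefl$, $\axCt$, and $\axCm$, so validity of $\icc$ on a conditional Kripke frame reduces to validity of these three axioms plus the automatic validity of the $\ick$-axioms (which hold on every conditional Kripke frame by soundness). The $\axRefl$-half is immediate from Table~\ref{table:fill-in}, so the real work lies in relating (\ref{ax:sick-corr}) to the combination of $\axCt$ and $\axCm$.

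For the easy direction, assume (\ref{ax:refl-corr}) and (\ref{ax:sick-corr}) and let $V$ be an arbitrary valuation. To verify $\axCm$, suppose $x \models (p \cto q) \land (p \cto r)$, writing $a = V(p)$, $b = V(q)$, $c = V(r)$. Then $R_a[x] \subseteq b$ and $R_a[x] \subseteq c$. By (\ref{ax:refl-corr}), $R_a[x] \subseteq a \cap b \subseteq a$, so (\ref{ax:sick-corr}) (applied with $a \cap b$ in place of $b$) yields ${\uparrow}R_a[x] = {\uparrow}R_{a \cap b}[x]$. Since $c$ is an upset containing $R_a[x]$, it contains ${\uparrow}R_{a \cap b}[x]$, hence $R_{a \cap b}[x] = R_{V(p \land q)}[x] \subseteq c$, as required. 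The argument for $\axCt$ is symmetric: from $R_a[x] \subseteq b$ and $R_{a \cap b}[x] \subseteq c$ one deduces in the same way that ${\uparrow}R_a[x] = {\uparrow}R_{a \cap b}[x] \subseteq c$.

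For the converse, assume $\fra{F} \models \icc$. Condition (\ref{ax:refl-corr}) is the standard correspondent of $\axRefl$ (see Table~\ref{table:fill-in}). To derive (\ref{ax:sick-corr}), fix $x \in X$ and upsets $b \subseteq a$ with $R_a[x] \subseteq b$; note that $a \cap b = b$. For the inclusion ${\uparrow}R_a[x] \subseteq {\uparrow}R_b[x]$, choose the valuation with $V(p) = a$, $V(q) = b$, $V(r) = {\uparrow}R_b[x]$ (which is an upset since it is the upward closure of a set). Then $x \models p \cto q$ by assumption, and $x \models (p \land q) \cto r$ because $R_b[x] \subseteq {\uparrow}R_b[x]$. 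Applying $\axCt$ at $x$ gives $R_a[x] \subseteq {\uparrow}R_b[x]$, and hence ${\uparrow}R_a[x] \subseteq {\uparrow}R_b[x]$. The reverse inclusion is obtained symmetrically by choosing $V(r) = {\uparrow}R_a[x]$ and invoking $\axCm$.

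There is no real obstacle in the argument; the only subtlety worth flagging is the bookkeeping that $b \subseteq a$ forces $V(p \land q) = b$, which is what lets the axioms $\axCt$ and $\axCm$ speak directly about $R_b[x]$ rather than $R_{a \cap b}[x]$. One should also verify that the choices $V(r) = {\uparrow}R_a[x]$ and $V(r) = {\uparrow}R_b[x]$ are admissible valuations into $\up(X, \leq)$; this is immediate since upward closures of arbitrary subsets are upsets.
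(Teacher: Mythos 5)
Your proposal is correct and follows essentially the same route as the paper's proof: both directions use the same choices of valuations (with $V(r)$ an upward closure of an $R$-image, and the observation that $b \subseteq a$ gives $V(p\wedge q)=b$), and the converse direction uses (\ref{ax:refl-corr}) to place $R_{V(p)}[x]$ inside $V(p)\cap V(q)$ before invoking (\ref{ax:sick-corr}), exactly as in the paper. The only differences are cosmetic (order of the two directions and writing $R_b[x]$ in place of $R_{a\cap b}[x]$).
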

\begin{proof}
  It is straightforward to see that \axRefl\ corresponds on \eqref{ax:refl-corr},
  so we focus on the other axioms.
  Suppose $\mb{X}$ validates $\icc$.
  Let $x \in X$, and $a, b \in A$ be such that $s(x, a) \subseteq b \subseteq a$.
  Let $V(p) = a$ and $V(q) = b$ and let $V(r) = c \in A$ for some arbitrary $c$.
  Then $x \models p \cto q$,
  so combining $\axCm$ and $\axCt$ shows that
  $x \models ((p \wedge q) \cto r) \leftrightarrow (p \cto r)$.
  Since $V(p) \cap V(q) = a \cap b = b$ and $V(r) = c$, we find
  \begin{equation*}
    s(x, b) \subseteq c \iff s(x, a) \subseteq c,
  \end{equation*}
  so using the fact that $\mb{X}$ is full or descriptive gives
  \begin{equation*}
    s(x, b)
      = \bigcap \{ c \in A \mid s(x, b) \subseteq c \}
      = \bigcap \{ c \in A \mid s(x, a) \subseteq c \}
      = s(x, a).
  \end{equation*}
  Therefore $\mb{X}$ satisfies \eqref{ax:sick-corr}.
  
  For the converse, suppose that $\mb{X}$ satisfies~\eqref{ax:refl-corr}
  and~\eqref{ax:sick-corr}.
  We show that $\mb{X}$ validates \axCt; the case for \axCm\ is similar.
  Let $V$ be any valuation for $\mb{X}$ and $x$ a world.
  Suppose $x \models (p \cto q) \wedge ((p \wedge q) \cto r)$.
  Then $s(x, V(p)) \subseteq V(q)$ and $s(x, V(p) \cap V(q)) \subseteq V(r)$.
  By~\eqref{ax:refl-corr}, we have
  $s(x, V(p)) \subseteq V(p) \cap V(q) \subseteq V(p)$,
  hence \eqref{ax:sick-corr} implies that $s(x, V(p)) = s(x, V(p) \cap V(q))$.
  Therefore $s(x, V(p)) \subseteq s(x, V(p)\cap V(q)) \subseteq V(r)$, so that
  $x \models p \cto r$ and $\mb{X} \models \axCt$.
\end{proof}

  As usual, we say that a logic is \emph{sound} with respect to a class of
  frames if every derivable consecution is valid on every frame in the class.

\begin{theorem}\label{thm:soundness}
  For any $\Ax \subseteq \lan{L}_{\cto}$,
  The logic $\ick \oplus \Ax$ is sound with respect to the classes
  of conditional frames, general conditional frames and descriptive
  conditional frames that validate $\Ax$.
\end{theorem}
\begin{proof}
  Since conditional frames can be viewed as a special type of general conditional
  frame, and descriptive conditional frames are general conditional frames,
  it suffices to prove that
  $\Gamma \vdash_{\Ax} \phi$ implies $\Gamma \models_{\Ax}^g \phi$
  for any $\Gamma \cup \{ \phi \} \subseteq \lan{L}_{\cto}$.
  This follows from a routine induction on the height of the derivation
  of $\Gamma \vdash \phi$.
\end{proof}

%==============================================================================%
\subsection{Canonical model construction}\label{sec:canonical-model}

  We recall the canonical model construction from~\cite{CiaLiu20},
  reformulated to the selection function point of view.
  This proves completeness of (extensions of) $\ick$ with respect to
  classes of general conditional frames.
  Throughout this subsection, we fix some set $\Ax \subseteq \lan{L}_{\cto}$
  of axioms.

\begin{definition}
  An \emph{$\Ax$-prime theory} is a collection $\Gamma \subseteq \lan{L}_{\cto}$
  of formulas such that:
  \begin{enumerate}
    \item $\Gamma$ is \emph{deductively closed},
          i.e.~$\Gamma \vdash_{\Ax} \phi$ implies $\phi \in \Gamma$
          for all $\phi \in \lan{L}_{\cto}$;
    \item $\Gamma$ is \emph{consistent}, i.e.~$\bot \notin \Gamma$;
    \item $\Gamma$ is \emph{prime}, i.e.~$\phi \vee \psi \in \Gamma$
          implies $\phi \in \Gamma$ or $\psi \in \Gamma$
          for all $\phi \vee \psi \in \lan{L}_{\cto}$.
  \end{enumerate} 
\end{definition}

  We write $X_{\Ax}$ for the set of $\Ax$-prime theories.
  Furthermore, for any $\phi \in \lan{L}_{\cto}$ we define
  $\tilde{\phi} := \{ \Gamma \in X_{\Ax} \mid \phi \in \Gamma \}$.
  The Lindenbaum lemma can be proved as usual.

\begin{lemma}[Lindenbaum lemma]\label{lem:lindenbaum}
  If $\Delta \cup \Sigma \subseteq \mathcal{L}_\cto$
  and $\Delta \not\vdash_{\Ax} \Sigma$, then there exists an
  $\Ax$-prime theory $\Gamma$ such that $\Delta \subseteq \Gamma$
  and $\Gamma \not\vdash_{\Ax} \Sigma$.
\end{lemma}

  Thus armed we can define the canonical frame and model for $\ick \oplus \Ax$ as follows:

\begin{definition}
  The \textit{canonical frame} for $\ick \oplus \Ax$ is the general frame
  $\can = (X_{\Ax}, \subseteq, s_{\Ax}, A_{\Ax})$ where
  $X_{\Ax}$ is the set of $\Ax$-prime theories,
  $A_{\Ax} = \{ \tilde{\phi} \mid \phi \in \lan{L}_{\cto} \}$ and
  \begin{equation*}
    s_{\Ax}(\Gamma, \tilde{\phi}) = \bigcap \{ \tilde{\psi} \mid \phi \cto \psi \in \Gamma \}.
  \end{equation*}
  The \emph{canonical valuation} $V_{\Ax} : \Prop \to A_{\Ax}$ is given by
  $V_{\Ax}(p) = \tilde{p}$, and the \emph{canonical model} is
  $\mb{M}_{\Ax} := (\can, V_{\Ax})$.
\end{definition}

\begin{remark}
  In~\cite{CiaLiu20}, the modal part of the canonical frame is defined using
  the set $\Cn_{\phi}(\Gamma) := \{ \psi \in \lan{L}_{\cto} \mid \phi \cto \psi \in \Gamma \}$.
  Translated to our setting, the selection function is defined by
  $s_{\Ax}(\Gamma, \tilde{\phi}) = \{ \Delta \in X_{\Ax} \mid \Cn_{\phi}(\Gamma) \subseteq \Delta \}$.
  This is equivalent to our definition because
  $\Cn_{\phi}(\Gamma) \subseteq \Delta$ iff $\{ \psi \in \lan{L}_{\cto} \mid \phi \cto \psi \in \Gamma \} \subseteq \Delta$ iff $\Delta \in \tilde{\psi}$ for every $\psi \in \lan{L}_{\cto}$ such that $\phi \cto \psi \in \Gamma$.
\end{remark}

  Using a truth lemma as usual we then obtain~\cite[Lemma~3 and Proposition~6]{CiaLiu20}:

\begin{theorem}
  The logic $\ick \oplus \Ax$ is strongly complete with respect to the class of
  general conditional frames that validate $\Ax$.
\end{theorem}

  We extend this result in two ways: first we show that one can restrict to
  the class of \emph{descriptive} conditional frames.
  Second, we prove strong completeness of $\ick$ with respect to (non-general)
  conditional frames. We refer to this as \emph{(strong) Kripke completeness},
  because it does not rely on any additional structure on the frames.

\begin{lemma}
  The canonical frame $\can$ is a descriptive conditional frame.
\end{lemma}
\begin{proof}
  It follows from~\cite[Proposition~5]{CiaLiu20} that $\can$ is a general
  conditional frame, so we only have to show that it satisfies
  \eqref{it:d-cpt}, \eqref{it:d-to} and \eqref{it:d-cto}.
  The latter is true by definition.
  Since the prime theories are ordered by inclusion, for any $\Gamma \in X_{\Ax}$
  we have ${\uparrow}\Gamma \subseteq \bigcap \{ \tilde{\phi} \mid \phi \in \Gamma \}$.
  This inclusion is in fact an equality, because $\Gamma \not\subseteq \Delta$
  implies the existence of some $\phi \in \Gamma$ such that $\phi \notin \Delta$,
  witnessing $\Delta \notin \bigcap \{ \tilde{\phi} \mid \phi \in \Gamma \}$.
  This proves~\eqref{it:d-to}.
  
  Finally, for compactness, suppose $\mc{B} \subseteq A_{\Ax} \cup -A_{\Ax}$ has
  the finite intersection property.
  We use the Lindenbaum lemma to construct an $\Ax$-prime theory contained in
  $\bigcap \mc{B}$. To this end, let
  \begin{equation*}
    \Delta := \{ \phi \in \lan{L}_{\cto} \mid \tilde{\phi} \in \mc{B} \}
    \quad\text{and}\quad
    \Sigma := \{ \psi \in \lan{L}_{\cto} \mid (X_{\Ax} \setminus \tilde{\psi}) \in \mc{B} \}.
  \end{equation*}
  Suppose towards a contradiction that $\Delta \vdash_{\Ax} \Sigma$.
  Then there exist formulas $\phi_1, \ldots, \phi_n \in \Delta$
  and $\psi_1, \ldots, \psi_m \in \Sigma$ such that
  $\phi_1, \ldots, \phi_n \vdash_{\Ax} \psi_1, \ldots, \psi_m$.
  But this implies
  \begin{equation*}
    \tilde{\phi}_1 \cap \cdots \cap \tilde{\phi}_n \cap (X_{\Ax} \setminus \tilde{\psi}_1) \cap \cdots \cap (X_{\Ax} \setminus \tilde{\psi}_m) = \emptyset,
  \end{equation*}
  contradicting the assumption that $\mc{B}$ has the finite intersection property.
  So we have $\Delta \not\vdash_{\Ax} \Sigma$, and we can use
  Lemma~\ref{lem:lindenbaum} to find a prime theory $\Gamma$
  that contains $\Delta$ and is disjoint from $\Sigma$.
  This implies $\Gamma \in \bigcap\mc{B}$, as desired.
\end{proof}

  Since the canonical frame is a descriptive conditional frame,
  we find:
  
\begin{theorem}{\label{thm:descriptive-complete}}
  For any $\Ax \subseteq \lan{L}_{\cto}$, the logic $\ick \oplus \Ax$
  is strongly complete with respect
  to the class of descriptive conditional frames that validate $\Ax$.
\end{theorem}

  Finally, we prove Kripke completeness of $\ick$ with
  respect to the class of all conditional frames.

\begin{theorem}\label{thm:ick-Kripke-complete}
  The logic $\ick$ is strongly complete with respect to the class of conditional frames.
\end{theorem}
\begin{proof}
  Let $\Ax \cup \Gamma \cup \{ \phi \} \subseteq \lan{L}_{\cto}$ and suppose
  $\Gamma \not\vdash_{\Ax} \phi$. Then the canonical model $\mb{M}_{\Ax}$
  does not validate the consecution
  $\Gamma \vdash \phi$, so there exists a world $x \in X_{\Ax}$ such that
  $\mb{M}_{\Ax}, x \models \psi$ for all $\psi \in \Gamma$, but 
  $\mb{M}_{\Ax}, x \not\models \phi$.
  
  We need to find a conditional frame that invalidates $\Gamma \vdash \phi$.
  To achieve this, we define a (non-general) selection function $\hat{s}$
  on $(X_{\Ax}, \subseteq)$ by
  \begin{equation*}
    \hat{s}
      : X_{\Ax} \times \up(X_{\Ax}, \subseteq) \to \up(X_{\Ax}, \subseteq)
      : (x, u) \mapsto
        \begin{cases}
          s_{\Ax}(x, u) &\text{if } u \in A_{\Ax} \\
          \emptyset &\text{otherwise}
        \end{cases}
  \end{equation*}
  and define $\mf{F} := (X_{\Ax}, \subseteq, \hat{s})$.
  If $y, z \in X_{\Ax}$ are prime theories such that $y \subseteq z$,
  then either $\hat{s}(z, u) = \emptyset \subseteq \emptyset = \hat{s}(y, u)$
  or $\hat{s}(z, u) = s_{\Ax}(z, u) \subseteq s_{\Ax}(y, u) = \hat{s}(y, u)$,
  so $\mf{F}$ is indeed a conditional frame.
  
  Now we note that $V_{\Ax}$ defines a valuation for $\mf{F}$,
  and with this valuation the truth set of a formula in $\mb{M}_{\Ax}$ and $(\mf{F}, V_{\Ax})$
  coincides. So $(\mf{F}, V_{\Ax}), x \models \Gamma$ but
  $(\mf{F}, V_{\Ax}), x \not\models \phi$.
  This proves that $\Gamma \not\models_{\Ax} \phi$,
  as desired.
\end{proof}

  The proof of Theorem~\ref{thm:ick-Kripke-complete} highlights an important
  method for obtaining completeness: \emph{filling in} missing relations to
  turn a general conditional frame into a conditional frame.
  We will see shortly that it is not always possible to simply fill in
  missing relations with the empty relation, as this may rescind
  validity of the axioms~in~$\Ax$.

%%%%%%%%%%%%%%%%%%%%%%%%%%%%%%%%%%%%%%%%%%%%%%%%%%%%%%%%%%%%%%%%%%%%%%%%%%%%%%%%
\section{The fill-in method}\label{sec:fill-in}

  The canonical model construction from Section~\ref{sec:canonical-model}
  gives completeness for any (consistent) extension of $\ick$ with respect to a
  suitable class of general conditional frames.
  To extend this to Kripke completenss, we need a way to turn a general
  conditional frame into a conditional frame that invalidates the same formulas.
  This is usually done by forgetting about the admissible sets,
  but in our case this would leave gaps in the collection of conditional relations.
  We bridge this gap via the following notion of a \emph{fill-in}.

\begin{definition}
  Let $\mb{X} = (X, \leq, s, A)$ be a general conditional frame.
  A \emph{fill-in} of $\mb{X}$ is a conditional Kripke frame
  $\mf{F} = (X, \leq, \hat{s})$ such that 
  $\hat{s}(x, a) = s(x, a)$ for all $x \in X$ and $a \in A$
\end{definition}

  In other words, the restriction of $\hat{s}$ to $X \times A$ equals $s$,
  i.e.~$\hat{s}_{\upharpoonright X \times A} = s$.
  We call this construction a fill-in because we are filling in the gaps
  in $s$ to obtain a function with domain $X \times \up(X, \leq)$.
  If $\mf{F}$ is a fill-in of $\mb{X}$, then every admissible valuation for
  $\mb{X}$ is also a valuation of $\mf{F}$. This implies that fill-ins preserve non-validity of consecutions:
  
\begin{lemma}\label{lem:inherit}
  Let $\mf{F}$ be a fill-in of a general conditional frame $\mb{X}$.
  Suppose $\mb{X}$ invalidates the consecution $\Gamma \vdash \phi$.
  Then $\mf{F}$ also invalidates $\Gamma \vdash \phi$.
\end{lemma}

%==============================================================================%
\subsection{The empty fill-in}\label{subsec:empty}

  Theorem~\ref{thm:ick-Kripke-complete} used a fill-in: for any non-admissible
  upset $u$ it defined $s(x, u) = \emptyset$.
  This is called the \emph{empty fill-in}.
  
\begin{definition}
  The \emph{empty fill-in} of a general conditional frame
  $\mb{X} = (X, \leq, s, A)$ is the conditional frame
  $\kappa_{\emptyset}(\mb{X}) := (X, \leq, \kappa_{\emptyset}(s))$, where
  \begin{equation*}
    \kappa_{\emptyset}(s)
      : X \times \up(X, \leq) \to \up(X, \leq)
      : (x, u) \mapsto 
        \begin{cases}
          s(x, u) &\text{if } u \in A \\
          \emptyset &\text{otherwise}
        \end{cases}
  \end{equation*}
\end{definition}

  We have seen in Theorem~\ref{thm:ick-Kripke-complete} that
  $\kappa_{\emptyset}(\mb{X})$ is a conditional frame.
  If we want to mimic the proof of Theorem~\ref{thm:ick-Kripke-complete} to
  obtain Kripke completeness for the extension of $\ick$ with a collection
  $\Ax$ of axioms, then we need to
  take one extra step: in order for $\kappa_{\emptyset}(\can)$ to
  witness $\Gamma \not\models_{\Ax} \phi$, we need to verify that it validates $\Ax$.
  The next example shows that this need not always be the case.
  
\begin{example}\label{exm:cm-not-empty-pers}
  Consider the descriptive conditional frame
  $\mb{N}_{\infty} = (N_{\infty}, =, s, A)$ from Example~\ref{exm:descr}.
  Let $V$ be an admissible valuation
  and let $x \in N_{\infty}$ be a world that satisfies
  $p \cto q$ and $(p \wedge q) \cto r$.
  Then $s(x, V(p)) = V(p) \cap N_{\infty}^{\even} \subseteq V(q)$
  and $s(x, V(p) \cap V(q)) = V(p) \cap V(q) \cap N_{\infty}^{\even} \subseteq V(r)$.
  This implies
  \begin{equation*}
    s(x, V(p))
      = V(p) \cap N_{\infty}^{\even}
      \subseteq V(p) \cap V(q) \cap N_{\infty}^{\even}
      \subseteq V(r)
  \end{equation*}
  so that $x \models p \cto r$. So we have proven that
  $\mb{N}_{\infty}$ validates $\axCm$.
  However, we claim that the empty fill-in $\kappa_{\emptyset}(\mb{N}_{\infty})$
  does not validate $\axCm$. To see this, consider a valuation of $p, q$ and $r$
  with $V(p) = N_{\infty}$ and $V(q) = N_{\infty}^{\even}$ and $V(r) = \emptyset$.
  Then for any world $x$ we have
  $x \models p \cto q$ but $x \not\models p \cto r$, because
  \begin{equation*}
    \kappa_{\emptyset}(s)(x, V(p))
      = s(x, V(p))
      = V(p) \cap N_{\infty}^{\even}
      = N_{\infty} \cap N_{\infty}^{\even}
      = N_{\infty}^{\even},
  \end{equation*}
  which is contained in $V(q)$ but not in $V(r)$.
  Moreover, $V(p) \cap V(q) = N_{\infty}^{\even} \notin A$, so by definition of
  the empty fill-in
  $\kappa(s)(x, V(p) \cap V(q)) = \emptyset$, hence $x \models (p \wedge q) \cto r$.
  This shows that $\axCm$ is not valid in $\kappa_{\emptyset}(\mb{N}_{\infty})$.
\end{example}

  This motivates the following notion of $\kappa_{\emptyset}$-persistence.
  This plays the same role as d-persistence in intuitionistic (and classical)
  modal logic~\cite{WolZak98,BlaRijVen01}.

\begin{definition}
  A formula $\phi \in \lan{L}_{\cto}$ is called \emph{$\kappa_{\emptyset}$-persistent}
  if for every descriptive conditional frame $\mb{X}$,
  \begin{equation*}
    \mb{X} \models \phi
    \quad\text{implies}\quad
    \kappa_{\emptyset}(\mb{X}) \models \phi.
  \end{equation*}
\end{definition}

\begin{proposition}\label{prop:empty-pers-complete}
  Let $\Ax$ be a collection of $\kappa_{\emptyset}$-persistent formulas
  of $\lan{L}_{\cto}$. Then $\ick \oplus \Ax$ is complete with respect
  to the class of conditional frames that validate $\Ax$.
\end{proposition}
\begin{proof}
  The proof is the same as that of Theorem~\ref{thm:ick-Kripke-complete},
  with the additional observation that $\kappa_{\emptyset}(\can)$
  is a frame for $\ick \oplus \Ax$ because all axioms in $\Ax$
  are $\kappa_{\emptyset}$-persistent.
\end{proof}

\begin{example}
  Examples of $\kappa_{\emptyset}$-persistent axioms include
  \begin{equation*}
    p \cto p, \quad
    (p \to q) \to (p \cto q)
    \quad\text{and}\quad
    (p \cto q) \vee (q \cto p).
  \end{equation*}
  To see that $p \cto p$ is $\kappa_{\emptyset}$-persistent, let $\mb{X} = (X, \leq, s, A)$
  be a general conditional frame that validates $\mb{X}$.
  Let $x \in X$ and let $V$ be any valuation for $\kappa_{\emptyset}(\mb{X})$.
  If $V(p) \in A$ then validity of $p \cto p$ on $\mb{X}$ entails that
  $\kappa_{\emptyset}(s)(x, V(p)) = s(x, V(p)) \subseteq V(p)$, so that
  $x \models p \cto p$. If $V(p)$ is not admissible then
  $\kappa_{\emptyset}(s)(x, V(p)) = \emptyset \subseteq V(p)$,
  so again $x \models p \cto p$. Therefore $\kappa_{\emptyset}(\mb{X}) \models p \cto p$.
  The second formula can be treated similarly,
  while for the third one we use the fact that it is true at $x$ whenever either
  $V(p)$ or $V(q)$ is not admissible.
\end{example}

  Next, we exploit the close connection between (descriptive) modal frames and
  (descriptive) conditional frames to obtain a general
  $\kappa_{\emptyset}$-persistence result. We start by detailing
  the semantic connection between the logics.
  
\begin{definition}
  Let $\mf{F} = (X, \leq, s)$ be a conditional frame and $u \in \up(X, \leq)$.
  Then the modal frame $\mf{F}_{\upharpoonright u}$ is defined by
  $\mf{F}_{\upharpoonright u} := (X, \leq, R_u)$,
  where $x R_u y$ if and only if $y \in s(x, u)$.
  Similarly, if $\mb{X} = (X, \leq, s, A)$ is a descriptive
  conditional frame and $a \in A$ then
  $\mb{X}_{\upharpoonright a} := (X, \leq, R_a, A)$
  denotes the induced descriptive modal frame.
\end{definition}
  
  We can translate $\lan{L}_{\Box}$-formulas into $\lan{L}_{\cto}$-formulas
  by picking a proposition letter $p \in \Prop$ and replacing each modality
  $\Box$ by $p \cto$.

\begin{definition}
  For $p \in \Prop$, define the conditional translation $(-)^p : \lan{L}_{\Box} \to \lan{L}_{\cto}$
  recursively by:
  \begin{align*}
    q^p &= q &\text{(for $q \in {\Prop} \cup \{ \top, \bot \}$)} \\
    (\psi \star \chi)^p &= \psi^p \star \chi^p &\text{(for $\star \in \{ \wedge, \vee, \to \}$)} \\
    (\Box\psi)^p &= p \cto \psi^p
  \end{align*}
\end{definition}

  Taking $p$ to be $\top$ yields the translation used by
  Weiss~\cite[Section~4.2]{Wei19}.
  (Translations of) $\lan{L}_{\Box}$-formulas and
  (restrictions of) conditional frames and descriptive conditional frames relate as follows.

\begin{lemma}\label{lem:t1} \
  \begin{enumerate}
    \item Let $\mf{F} = (X, \leq, s)$ be a conditional frame
          and $V$ a valuation for $\mf{F}$.
          Then for any world $x \in X$ and formula $\phi \in \lan{L}_{\Box}$ we have
          $(\mf{F}, V), x \models \phi^p$ iff 
          $(\mf{F}_{\upharpoonright V(p)}, V), x \models \phi$.
    \item Let $\mb{X}$ be a descriptive conditional frame
          and $V$ an admissible valuation for $\mb{X}$.
          Then for any world $x$ and formula $\phi \in \lan{L}_{\Box}$ we have
          $(\mb{X}, V), x \models \phi^p$ iff
          $(\mb{X}_{\upharpoonright V(p)}, V), x \models \phi$.
  \end{enumerate}
\end{lemma}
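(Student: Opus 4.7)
The plan is to prove both parts by simultaneous structural induction on the modal formula $\phi$. Since the translation $(-)^p$ is the identity on propositional variables, $\top$, $\bot$, and commutes with all the propositional connectives, and since a valuation $V$ for a conditional (Esakia) frame is literally the same data as a valuation for any modal (Esakia) frame with the same underlying intuitionistic Kripke frame, the base cases and the inductive steps for $\wedge$, $\vee$, and $\to$ are immediate. The only interesting case is $\phi = \Box\psi$, and that case simply unwinds the definitions.

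First, for part (1), I would observe that a valuation $V$ on $\fra{F}$ automatically assigns an upset $V(p) \in \up(X,\leq)$, so $R_{V(p)}$ is one of the relations in $\mc{R}$, and hence $\fra{F}_{\upharpoonright V(p)} = (X, \leq, R_{V(p)})$ is a well-defined modal frame. Then for the $\Box$-step, I would chain:
\begin{align*}
  (\fra{F}, V), x \models (\Box\psi)^p
    &\iff (\fra{F}, V), x \models p \cto \psi^p \\
    &\iff \forall y \in X (x R_{V(p)} y \text{ implies } (\fra{F}, V), y \models \psi^p) \\
    &\iff \forall y \in X (x R_{V(p)} y \text{ implies } (\fra{F}_{\upharpoonright V(p)}, V), y \models \psi) \\
    &\iff (\fra{F}_{\upharpoonright V(p)}, V), x \models \Box\psi,
\end{align*}
where the first line uses the definition of $(-)^p$, the second uses the clause for $\cto$ from Definition~\ref{def:cond-interpretation}, the third invokes the induction hypothesis on $\psi$, and the last line is the semantic clause for $\Box$ in the modal frame.

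For part (2), the argument is identical; the only additional point to check is that $\topo{X}_{\upharpoonright V(p)}$ is well defined as a modal Esakia space. This uses the assumption that $V$ is a \emph{clopen} valuation, so that $V(p) \in \ClpUp(\topo{X})$ and hence $R_{V(p)} \in \mc{R}$. The defining properties of a modal Esakia space, namely $R_{V(p)} = ({\leq} \circ R_{V(p)} \circ {\leq})$ and closedness of $R_{V(p)}[x]$, are inherited directly from conditions~\eqref{it:ces-2} and~\eqref{it:ces-3} of Definition~\ref{def:ces}. No part of the argument is truly delicate; the main thing to be careful about is keeping the two roles of $V$ straight, since the same function serves as a valuation on both the conditional and the modal side, but a propositional letter that occurs inside $\psi^p$ can coincide with the ``parameter'' $p$ used in the translation (which is harmless, as the translation only inspects boxes).
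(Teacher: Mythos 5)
Your proof is correct and follows essentially the same route as the paper's: structural induction on $\phi$, with the propositional cases immediate because the conditional frame and its restriction share the same underlying poset, and the $\Box$-case handled by unwinding the definition of $(-)^p$ and the semantic clauses for $\cto$ and $\Box$ together with the induction hypothesis. The extra well-definedness checks for $\fra{F}_{\upharpoonright V(p)}$ and $\topo{X}_{\upharpoonright V(p)}$ are left implicit in the paper but are a harmless (and sensible) addition.
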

\begin{proof}
  We prove the first item by induction on $\phi$.
  Since $\mf{F}$ and $\mf{F}_{\upharpoonright V(p)}$ have the same underlying
  poset, the inductive cases for propositional letters, $\bot$ $\land$,
  $\lor$ and $\to$ are immediate. For the modal case, suppose $\phi = \Box\psi$,
  so $(\Box\psi)^p = p \cto \psi$,
  and compute
  \begin{equation*}
    (\mf{F}, V), x \models p \cto \psi
      \iff s(x, V(p)) \subseteq V(\psi)
      \iff R_{V(p)}[x] \subseteq V(\psi)
      \iff (\mf{F}_{\upharpoonright V(p)}, V), x \models \Box\psi.
  \end{equation*}
  The second item can be proven analogously.
\end{proof}

  The next lemma is an analogue of Lemma~\ref{lem:t1} to validity on descriptive conditional frames.

\begin{lemma}\label{lem:t2}
  Let $\mb{X} = (X, \leq, s, A)$ be a descriptive conditional frame
  and $\phi$ a modal formula that does not contain the proposition letter
  $p$ as a subformula. Then we have
  \begin{equation*}
    \mb{X} \models \phi^p
    \iff \mb{X}_{\upharpoonright a} \models \phi \text{ for all } a \in A.
  \end{equation*}
\end{lemma}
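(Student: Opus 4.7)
The plan is to derive both directions from Lemma~\ref{lem:t1}(2), using the hypothesis that $p$ is not a subformula of $\phi$ to freely adjust the value assigned to $p$ without affecting the truth of $\phi$.

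For the forward direction, assume $\topo{X} \models \phi^p$ and fix any $a \in \ClpUp(\topo{X})$. To show $\topo{X}_{\upharpoonright a} \models \phi$, take an arbitrary clopen valuation $V'$ for $\topo{X}_{\upharpoonright a}$ and a world $x \in X$. Define a clopen valuation $V$ for $\topo{X}$ by $V(q) = V'(q)$ for every proposition letter $q \neq p$ and $V(p) = a$; this is well-defined because $a$ is clopen. Since $\topo{X} \models \phi^p$, we have $(\topo{X}, V), x \models \phi^p$, and hence by Lemma~\ref{lem:t1}(2),
\begin{equation*}
(\topo{X}_{\upharpoonright V(p)}, V), x \models \phi.
\end{equation*}
Because $p$ does not occur in $\phi$, the truth of $\phi$ at $x$ depends only on $V$ restricted to the proposition letters of $\phi$, which agrees with $V'$. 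Together with $V(p) = a$ this gives $(\topo{X}_{\upharpoonright a}, V'), x \models \phi$.

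For the backward direction, assume $\topo{X}_{\upharpoonright a} \models \phi$ for every $a \in \ClpUp(\topo{X})$, let $V$ be a clopen valuation for $\topo{X}$, and let $x \in X$. Then $V(p)$ is a clopen upset, so by assumption $\topo{X}_{\upharpoonright V(p)} \models \phi$, and in particular $(\topo{X}_{\upharpoonright V(p)}, V), x \models \phi$ (the restriction of $V$ to proposition letters of $\phi$ yields an admissible clopen valuation, and $p \notin \phi$ means the value $V(p)$ is irrelevant to $\phi$). By Lemma~\ref{lem:t1}(2) this gives $(\topo{X}, V), x \models \phi^p$, so $\topo{X} \models \phi^p$.

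Neither direction presents a serious obstacle; the content is really just a bookkeeping exercise combining Lemma~\ref{lem:t1}(2) with the elementary observation that the interpretation of a formula does not depend on the valuation of proposition letters it does not mention. The only mild subtlety is verifying that setting $V(p) = a$ for an arbitrary clopen upset $a$ genuinely produces an admissible valuation on the ambient conditional Esakia space $\topo{X}$, which is immediate from the fact that admissible valuations in $\topo{X}$ are exactly the clopen ones.
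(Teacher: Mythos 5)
Your proof is correct and follows essentially the same route as the paper: both directions reduce to Lemma~\ref{lem:t1}(2) after modifying the valuation so that $p$ is assigned the clopen upset $a$ (respectively using $V(p)$ itself), with the hypothesis that $p$ does not occur in $\phi$ guaranteeing that this modification is harmless. No gaps; the paper's own argument is the same bookkeeping with the roles of $V$ and $V'$ interchanged.
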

\begin{proof}
  Suppose $\mb{X} \models \phi^p$. Let $a \in A$, $x \in X$
  and let $V$ be an admissible valuation for $\mb{X}_{\upharpoonright a}$.
  Let $V'$ be the valuation given by $V'(p) = a$
  and $V'(q) = V(q)$ for all other $q \in \Prop$.
  Since $p$ does not occur in $\phi$ and $V'(p) = a$, we get
  \begin{equation*}\label{eq:t2-1}
    (\mb{X}_{\upharpoonright a}, V), x \models \phi
      \iff (\mb{X}_{\upharpoonright a}, V'), x \models \phi
      \iff (\mb{X}_{\upharpoonright V'(p)}, V'), x \models \phi.
  \end{equation*}
  By Lemma~\ref{lem:t1} we have
  $(\mb{X}_{\upharpoonright V'(p)}, V'), x \models \phi$ if and only if
  $(\mb{X}, V'), x \models \phi^p$. Since the latter holds by assumption,
  we find $(\mb{X}_{\upharpoonright a}, V), x \models \phi$,
  and since $a$, $V$ and $x$ were arbitrary, we find $\mb{X}_{\upharpoonright a} \models \phi$
  for all $a \in A$.
  
  For the converse, assume $\mb{X}_{\upharpoonright a} \models \phi$
  for all $a \in A$. Let $V$ be any valuation for $\mb{X}$
  and $x \in X$. Then using the assumption and Lemma~\ref{lem:t1} yields
  $\mb{X} \models \phi^p$.
\end{proof}

  The following theorem allows us to deduce $\kappa_{\emptyset}$-persistence
  for certain (sets of) $\lan{L}_{\cto}$-formulas.

\begin{theorem}\label{thm:Sahl}
  Let $\phi = (\psi \to (\Box\chi \vee \xi)) \in \lan{L}_{\Box}$
  be a d-persistent modal formula
  that does not contain $p$ as a subformula.
  Then $\phi^p$ is $\kappa_{\emptyset}$-persistent.
\end{theorem}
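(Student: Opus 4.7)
The plan is to reduce $\kappa_\emptyset$-persistence of $\phi^p$ to d-persistence of the modal formula $\phi$, using Lemma~\ref{lem:t2} on both the topological and the discrete side, together with the specific syntactic shape $\psi \to (\Box\chi \vee \xi)$ to handle upsets $a$ for which the fill-in inserts an empty relation.

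First, I would note that Lemma~\ref{lem:t2} really has a frame-level analogue: by the same argument (replacing clopen valuations with arbitrary ones, and using Lemma~\ref{lem:t1}(1) instead of (2)), for any conditional frame $\fra{F} = (X,\leq,\mc{R})$ and any modal formula $\alpha$ not mentioning $p$, one has $\fra{F} \models \alpha^p$ if and only if $\fra{F}_{\upharpoonright a} \models \alpha$ for every $a \in \up(X,\leq)$. I would record this as a short preliminary observation.

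Now suppose $\esk{X} = (X,\leq,\tau,\mc{R})$ is a conditional Esakia space with $\esk{X} \models \phi^p$. By Lemma~\ref{lem:t2}, $\esk{X}_{\upharpoonright a} \models \phi$ for every $a \in \ClpUp(\esk{X})$. Each $\esk{X}_{\upharpoonright a}$ is a modal Esakia space, so d-persistence of $\phi$ yields $\kappa(\esk{X}_{\upharpoonright a}) \models \phi$ for all clopen upsets $a$. I then want to deduce that $\kappa_\emptyset(\esk{X}) \models \phi^p$, which by the frame-level analogue of Lemma~\ref{lem:t2} amounts to showing $\kappa_\emptyset(\esk{X})_{\upharpoonright a} \models \phi$ for every upset $a$.

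I would split into two cases. If $a \in \ClpUp(\esk{X})$, then $\kappa_\emptyset(\esk{X})_{\upharpoonright a}$ has underlying frame $(X,\leq,R_a)$ where $R_a$ is the original conditional relation, so this frame is exactly $\kappa(\esk{X}_{\upharpoonright a})$, which validates $\phi$ by the previous paragraph. If $a \notin \ClpUp(\esk{X})$, then by definition of the empty fill-in the relation is $R_a = \emptyset$. Here the specific form of $\phi$ does the work: for the empty relation, $\Box\chi$ is vacuously true at every world, so the disjunction $\Box\chi \vee \xi$ is globally true, and hence $\psi \to (\Box\chi \vee \xi)$ is validated in this frame regardless of how $\psi$ and $\xi$ are interpreted.

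Combining the two cases gives $\kappa_\emptyset(\esk{X})_{\upharpoonright a} \models \phi$ for all upsets $a$, hence $\kappa_\emptyset(\esk{X}) \models \phi^p$, as required. The main subtlety is making precise the frame analogue of Lemma~\ref{lem:t2} (so that one can freely move between $\phi^p$ on $\kappa_\emptyset(\esk{X})$ and $\phi$ on its restrictions at arbitrary, not merely clopen, upsets); the rest is essentially bookkeeping plus the observation that the disjunct $\Box\chi$ trivialises on empty accessibility.
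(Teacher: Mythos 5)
Your proof is correct and takes essentially the same route as the paper's: Lemma~\ref{lem:t2} plus d-persistence of $\phi$ on the clopen restrictions, the identification $\kappa(\esk{X}_{\upharpoonright a}) = (\kappa_{\emptyset}(\esk{X}))_{\upharpoonright a}$, and the trivialisation of $\Box\chi$ over the empty relation for non-clopen upsets. The only difference is presentational: the paper fixes an arbitrary valuation $V$ on $\kappa_{\emptyset}(\esk{X})$ and case-splits on whether $V(p)$ is clopen, applying Lemma~\ref{lem:t1} directly, whereas you case-split on the indexing upset via a frame-level analogue of Lemma~\ref{lem:t2}, which indeed follows from Lemma~\ref{lem:t1}(1) exactly as you indicate.
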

\begin{proof}
  Let $\mb{X} = (X, \leq, s, A)$ be a descriptive conditional frame
  that validates $\phi^p$.
  Let $V$ be an arbitrary valuation for $\kappa_{\emptyset}(\mb{X})$.
  If $V(p)$ is admissible, then it follows from Lemma~\ref{lem:t2}
  that $\mb{X}_{\upharpoonright V(p)} \models \phi$.
  Since $\phi$ is d-persistent, we find that
  $\kappa(\mb{X}_{\upharpoonright V(p)}) \models \phi$,
  hence in particular $(\kappa(\mb{X}_{\upharpoonright V(p)}), V) \models \phi$.
  Now observe that $\kappa(\mb{X}_{\upharpoonright V(p)}) = (\kappa_{\emptyset}(\mb{X}))_{\upharpoonright V(p)}$, so we have
  $((\kappa_{\emptyset}(\mb{X}))_{\upharpoonright V(p)}, V) \models \phi$.
  Finally Lemma~\ref{lem:t1} implies $(\kappa_{\emptyset}(\mb{X}), V) \models \phi^p$.
  
  If $V(p)$ is not admissible, then $R_{V(p)}[x] = \emptyset$ for any $x \in X$.
  Therefore $p \cto \chi^p$ is trivially valid on $(\kappa_{\emptyset}(\mb{X}), V)$,
  hence so is $\phi^p = \psi^p \to ((p \cto \chi^p) \vee \xi^p)$.
  We conclude that $\phi$ is $\kappa_{\emptyset}$-persistent.
\end{proof}

  Combining this with Theorem~\ref{thm:d-pers-Sahl} gives:

\begin{corollary}\label{cor:Sahl}
  Let $\phi \in \lan{L}_{\Box}$ be a formula of the form
  $\phi = (\psi \to (\Box \chi \vee \xi))$.
  If $t(\phi)$ is Sahlqvist,
  then $\phi^p$ is $\kappa_{\emptyset}$-persistent.
\end{corollary}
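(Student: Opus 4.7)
The plan is to chain three ingredients together: the classical Sahlqvist theorem, the transfer result of Wolter--Zakharyaschev connecting d-persistence of classical modal companions to d-persistence of intuitionistic normal modal logics, and Theorem~\ref{thm:Sahl} connecting d-persistence in the intuitionistic modal world to $\kappa_{\emptyset}$-persistence in the conditional world.

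First, I would invoke the classical Sahlqvist theorem (as stated in \cite[Section~3.6]{BlaRijVen01} or \cite[Section~10.3]{ChaZak97}): since $t(\phi)$ is assumed Sahlqvist by hypothesis, it is d-persistent in the classical bimodal setting. The $\log{S4}$ axioms for $\Boxi$ and the mixing axiom $\mix$ are also Sahlqvist (as remarked in the excerpt just before the corollary), hence d-persistent. Because d-persistence is preserved under taking unions of axiom sets, the classical bimodal logic $(\log{S4} \otimes \log{K}) \oplus t(\phi) \oplus \mix$ is d-persistent.

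Next, recall that by the discussion preceding Theorem~12 of~\cite{WolZak98} (cited in the excerpt), $(\log{S4} \otimes \log{K}) \oplus t(\phi) \oplus \mix$ is a modal companion of $\log{iK} \oplus \phi$ and contains $\mix$. Then Theorem~12 of~\cite{WolZak98}, stated above the corollary, directly yields that $\log{iK} \oplus \phi$ is d-persistent. In particular, the single axiom $\phi$ is d-persistent in the sense of Definition~\ref{def:d-pers}.

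Finally, I would apply Theorem~\ref{thm:Sahl}. Since $\phi$ has the required shape $\psi \to (\Box\chi \vee \xi)$ and is d-persistent, and since $p$ can be assumed not to occur in $\phi$ (if it did, one may rename the conditional antecedent letter, as $\phi^p$ is defined relative to a chosen fresh letter), Theorem~\ref{thm:Sahl} directly gives that $\phi^p$ is $\kappa_{\emptyset}$-persistent, which is the claim. There is no real obstacle here beyond checking that the hypotheses of each cited theorem align; the only minor subtlety is the freshness of $p$, which is harmless because $\ick \oplus \phi^p$ is closed under uniform substitution.
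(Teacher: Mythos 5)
Your proposal is correct and follows exactly the route the paper intends: the corollary is stated as a summary of the preceding discussion, namely classical Sahlqvist canonicity of $t(\phi)$ together with the Sahlqvist axioms for $\log{S4}$ and $\mix$, the modal-companion transfer theorem (Theorem~12 of Wolter--Zakharyaschev) to get d-persistence of $\log{iK} \oplus \phi$, and then Theorem~\ref{thm:Sahl} to pass from d-persistence of $\phi$ to $\kappa_{\emptyset}$-persistence of $\phi^p$. Your remark on the freshness of $p$ is a reasonable way to handle a hypothesis the paper leaves implicit.
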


  In particular, this captures formulas of the form $\Box\chi \vee \xi$, because these are equivalent to $\top \to (\Box\chi \vee \xi)$.
  Combining Corollary~\ref{cor:Sahl} and Proposition~\ref{prop:empty-pers-complete} entails
  completeness for a wide range of extensions of $\ick$.
  The induced completeness results all appear to be new.
  We showcase some examples.

\begin{example}
  Consider the $\lan{L}_{\Box}$-formula $q \to \Box q$.
  Its GMT-translation is $\Boxi(\Boxi q \to \Boxi\Boxm\Boxi q)$,
  and in presence of $\mix$ this is equivalent to $\Boxi(\Boxi q \to \Boxm q)$.
  Either shape of the formula is Sahlqvist, hence
  $q \to \Box q$ is d-persistent, and therefore
  $(q \to \Box q)^p = q \to (p \cto q)$
  is $\kappa_{\emptyset}$-persistent.
  
  We can use similar reasoning for $\lan{L}_{\Box}$-formulas of the form
  $\Box^nq \to \Box^mq$ where $m \geq 1$,
  and for formulas of the form $\Box^{\ell}(\Box^nq \to \Box^mq)$ where $\ell \geq 1$.
  So, the conditinoal translation of any formula of such a shape is $\kappa_{\emptyset}$-persistent.
  These include \axCf\ and \axF.
\end{example}

\begin{example}
  Consider the axiom $\Box q \vee \Box\neg q$, i.e.~$\Box q \vee \Box(q \to \bot)$.
  This translates to the axiom of \emph{conditional excluded middle} \axCem,
  $(p \cto q) \vee (p \cto \neg q)$,
  (which has turned out to be a somewhat controversial axiom in conditional
  logic, see e.g.~\cite[Section~3.4]{Lew73} and~\cite{Sta80,Wil10}).
  Its GMT-translation is:
  \begin{alignat*}{2}
    t(\Box q \vee \Box\neg q)
      &= \Boxi(\Boxi\Boxm\Boxi q \vee \Boxi\Boxm\Boxi(\Boxi q \to \Boxi\bot)) \hspace{-10em} \\
  \intertext{Since $\Boxi$ is reflexive we have $\Boxi\bot \leftrightarrow \bot$.
             Using this observation, $\mix$, and the classical duality between
             boxes and diamonds, we can rewrite this further:}
      &= \Boxi(\Boxm q \vee \Boxm(\Boxi q \to \bot))
      &&= \Boxi(\Boxm q \vee \Boxm\neg\Boxi q) \\
      &= \Boxi(\Boxm q \vee \neg\Diamondm\Boxi q)
      &&= \Boxi(\Diamondm\Boxi q \to \Boxm q)
  \end{alignat*}
  This is a Sahlqvist formula, so
  $\Box q \vee \Box\neg q$ is d-persistent and
  $(p \cto q) \vee (p \cto \neg q)$ is $\kappa_{\emptyset}$-persistent.
  Analogously, we can show $\kappa_{\emptyset}$-persistence of other versions of conditional
  excluded middle
  and excluded conditional middle:
  \begin{multicols}{2}
    \begin{enumerate}\itemindent=1.1em \itemsep=0em
      \item[(\axCemm)]  $p \cto (q \vee \neg q)$
      \item[(\axCemmm)] $q \vee (p \cto \neg(p \cto q))$
      \item[(\axEcm)]   $(p \cto q) \vee \neg(p \cto q)$
      \item[(\axEcmm)]  $(p \cto q) \vee (p \cto \neg(p \cto q))$
    \end{enumerate}
  \end{multicols}
\end{example}

\begin{example}\label{exm:other}
  All of the axioms listed in Example~\ref{exm:access-control} are
  $\kappa_{\emptyset}$-persistent. For all except $\axTrust$ this
  follows from Corollary~\ref{cor:Sahl}.
  The case for $\axTrust$ follows from the fact that validity only
  relies on the value of $s(x, a)$ for $a = \emptyset$.
  Other examples of $\kappa_{\emptyset}$-persistent include the
  following analogues of \emph{conditional linearity}:
  \begin{enumerate}\itemindent=1.1em \itemsep=0em
    \item[($\mathsf{clin_1}$)] $p \cto ((q \to r) \vee (r \to q))$
    \item[($\mathsf{clin_2}$)] $p \cto (q \to r)) \vee (p \cto (r \to q))$
    \item[($\mathsf{clin_3}$)] $(p \cto ((p \cto q) \to r)) \vee (p \cto ((p \cto r) \to q))$
  \end{enumerate}
\end{example}
  
  As a particular application of the persistence from Example~\ref{exm:other}
  we find:

\begin{theorem}
  The extension of $\ick$ with any of the axioms from
  Example~\ref{exm:access-control} is strongly Kripke complete.
\end{theorem}

%==============================================================================%
\subsection{The cautious fill-in}

  We have seen in Example~\ref{exm:cm-not-empty-pers} that $\axCm$ is not
  $\kappa_{\emptyset}$-persistent. The same example can be used to show that
  $\axCt$ is not $\kappa_{\emptyset}$-persistent either.
  In this section we explore a fill-in that is specialised to accommodate the
  cautious axioms.

\begin{definition}
  The \emph{cautious fill-in} of a general conditional frame
  $\mb{X} = (X, \leq, s, A)$ is the conditional frame
  $\ckappa(\mb{X}) := (X, \leq, \ckappa(s))$, whose selection function
  is defined by
  \begin{equation*}
    \ckappa(s)
      : X \times \up(X, \leq) \to \up(X, \leq)
      : (x, u) \mapsto u \cap \bigcap \{ s(x, a) \mid a \in A \text{ and } s(x, a) \subseteq u \subseteq a \}.
  \end{equation*}
\end{definition}

\begin{lemma}
  Let $\mb{X} = (X, \leq, s, A)$ be a general conditional frame.
  Then $\ckappa(\mb{X})$ is a conditional frame.
\end{lemma}
\begin{proof}
  It is clear that $\ckappa$ is well defined, because
  $\ckappa(s)(x, u)$ is given by the intersection of upsets.
  We still need to verify that $x \leq y$ implies $s(y, u) \subseteq s(x, u)$.
  So suppose $x \leq y$.
  Then we know that $s(y, a) \subseteq s(x, a)$ for all $a \in A$.
  Now suppose $u$ is an upset that is not in $A$.
  If $a \in A$ is such that $s(x, a) \subseteq u \subseteq a$ then
  since $s(y, a) \subseteq s(x, a)$ we also have
  $s(y, a) \subseteq u \subseteq a$.
  Therefore every term in the intersection defining $s(x, u)$
  is also in the intersection defining $s(y, u)$,
  so $s(y, u) \subseteq s(x, u)$.
  If no such $a$ exists then $s(x, u) = u$ which immediately provides $s(y, u) \subseteq u = s(x, u)$.
\end{proof}

  Using the following notion of $\ckappa$-persistence, we can obtain
  completeness results for extensions of $\icc$.

\begin{definition}
  A set $\Ax \subseteq \lan{L}_{\cto}$ of axioms is called
  $\ckappa$-persistent if for any descriptive cautions frame $\mb{X}$,
  $\mb{X} \models \Ax$ implies $\ckappa(\mb{X}) \models \Ax$.
\end{definition}

\begin{proposition}\label{prop:caut-comp}
  Let $\Ax$ be a $\ckappa$-persistent set of formulas.
  Then $\icc \oplus \Ax$ is strongly complete with respect to the class
  of cautious frames that validate $\Ax$.
\end{proposition}

  By a \emph{cautious (descriptive) conditional frame} we mean a (descriptive) conditional frame
  for $\icc$, or equivalently, a conditional frame that satisfies the correspondence conditions
  \eqref{ax:refl-corr} and~\eqref{ax:sick-corr}~from Lemma~\ref{lem:sick-fr-corr}.

\begin{lemma}
  The set $\{ \axRefl, \axCt, \axCm \}$ is $\ckappa$-persistent.
\end{lemma}
\begin{proof}
  By Lemma \ref{lem:sick-fr-corr}, it suffices to prove that if a descriptive conditional
  frame $\mb{X}$ satisfies~\eqref{ax:refl-corr} and~\eqref{ax:sick-corr},
  then so does $\ckappa(\mb{X})$.
  So suppose $\mb{X}$ is such a descriptive conditional frame.
  It follows immediately from the definitions that
  $\ckappa(\mb{X})$ satisfies~\eqref{ax:refl-corr}.
  To prove that it also satisfies~\eqref{ax:sick-corr}
  we use the following claim:

\medskip\noindent
\textit{Claim: suppose $\mb{X}$ is a cautious descriptive conditional frame, $u$ is an upset and $a$ is an admissible upset.
Then $s(x, a) \subseteq u \subseteq a$ implies $\ckappa(s)(x, u) = s(x, a)$.}
  If $u \in A$ then this follows immediately from the fact that
  $\mb{X}$ satisfies~\eqref{ax:sick-corr}.
  If $u \notin A$, then it suffices to show that for any other admissible upset $b \in A$
  such that $s(x, b) \subseteq u \subseteq b$, we have
  $s(x, a) = s(x, b)$.
  Given such $a, b$, it follows from the assumption and~\eqref{ax:refl-corr} that
  $s(x, a) \subseteq a \cap b \subseteq a$ and $s(x, b) \subseteq a \cap b \subseteq b$.
  Since $a \cap b \in A$,~\eqref{ax:sick-corr} then gives
    $s(x, a) = s(x, a \cap b) = s(x, b)$.
  
  \medskip\noindent
  Next we prove that $\ckappa(\mb{X})$ satisfies~\eqref{ax:sick-corr}.
  To this end, let $x \in X$ and let $u, v$ be upsets such that $\ckappa(s)(x, u) \subseteq v \subseteq u$.
  If $u \in A$ then it follows immediately from the claim that
  $\ckappa(s)(x, u) = \ckappa(s)(x, v)$.
  Now suppose $u \notin A$.
    If $s(x, a) \subseteq u \subseteq a$ for some $a \in A$
    then
    \begin{equation*}
      s(x, a) = \ckappa(s)(x, u) \subseteq v \subseteq u \subseteq a,
    \end{equation*}
    so by the claim $\ckappa(s)(x, v) = s(x, a) = \ckappa(s)(x, u)$.
    If no such $a \in A$ exists, then $\ckappa(s)(x, u) = u$ by definition,
    so that the assumption $\ckappa(s)(x, u) \subseteq v \subseteq u$
    implies that $u = v$, hence $\ckappa(s)(x, u) = \ckappa(s)(x, v)$.
\end{proof}

\begin{lemma}
  The following axioms are $\ckappa$-persistent:
  \begin{multicols}{3}
  \begin{enumerate}\itemindent=1.1em \itemsep=0em
    \item[(\axRefl)] $p \cto p$
    \item[(\axMP)] $(p \cto q) \to (p \to q)$
    \item[(\axCS)] $(p \wedge q) \to (p \cto q)$
  \end{enumerate}
  \end{multicols}
\end{lemma}
\begin{proof}
  We have seen that $p \cto p$ is $\ckappa$-persistent.
  For $\axMP$, we use the fact that validity in conditional frames and descriptive
  conditional frames corresponds to ${\uparrow}x \cap a \subseteq s(x, a)$ for all
  worlds $x$ and (admissible) upsets $a$. (The routine proof of this fact is provided in the appendix.)
  Suppose a descriptive conditional frame $\mb{X} = (X, \leq, s, A)$ satisfies this
  condition. Then we need to prove that for any non-admissible upset $u$ we have
  ${\uparrow}x \cap u \subseteq \ckappa(s)(x, u)$.
  Note that for each $a \in A$ such that $s(x, a) \subseteq u \subseteq a$ we have
  \begin{equation*}
    {\uparrow}x \cap u
      \subseteq {\uparrow}x \cap a
      \subseteq s(x, a)
  \end{equation*}
  Moreover, we clearly have ${\uparrow}x \cap u \subseteq u$.
  Therefore, by definition of $\ckappa(s)(x, u)$ we get
  ${\uparrow}x \cap u \subseteq \ckappa(s)(x, u)$.
  
  The axiom $\axCS$ is valid in a (descriptive) conditional frame if and only if for all worlds $x$ and (admissible) upsets $a$,
  $x \in a$ implies $s(x, a) \subseteq {\uparrow}x$ (Lemma~\ref{lem:cs-corr}). Suppose $\mb{X} = (X, \leq, s, A)$ is a descriptive
  frame satisfying this condition. We aim to show that it holds for non-admissible
  upsets too.
  So suppose $u$ is a non-admissible upset and $x \in u$.
  Then we have
  \begin{equation*}
    s(x, X)
      \subseteq {\uparrow}x
      \subseteq u
      \subseteq X
  \end{equation*}
  so by definition $\ckappa(s)(x, u) \subseteq s(x, X) \subseteq {\uparrow}x$,
  as desired.
\end{proof}

  These persistence lemmas give rise to several more completeness results
  for extensions of $\ick$, all of which are new in the
  intuitionistic conditional logic literature.

\begin{theorem}
  Let $\Ax \subseteq \{ \axRefl, \axMP, \axCS \}$.
  Then $\ick \oplus \Ax$ and $\icc \oplus \Ax$ are strongly complete
  with respect to the classes of conditional frames validating them, respectively.
\end{theorem}
\begin{proof}
  This follows from Proposition~\ref{prop:caut-comp} and the
  preceding two lemmas.
\end{proof}

  In light of the general $\kappa_{\emptyset}$-persistence result
  from Section~\ref{subsec:empty}, it is natural to pose the following question:

\begin{question}
  Is it possible to derive general $\ckappa$-persistence result, similar to that in Section~\ref{subsec:empty}?
\end{question}

%==============================================================================%
\subsection{Outlook}

For axioms that are not $\kappa_{\emptyset}$- or $\ckappa$-persistent not all
hope is lost: the fill-in technique can be applied with other types of fill-ins.
More involved conditional logics may require more complicated fill-in
constructions.
Several of these suggest themselves, each suited to a different class of axioms.

\medskip
\noindent\textit{Fill-in agnostic formulas.} \;
A type of persistence we have not yet mentioned is that of \emph{fill-in agnostic} formulas. The validity of these formulas on a conditional frame is independent of the choice of fill-in altogether. This occurs when the conditional antecedent is only ever evaluated at admissible sets, so that every fill-in agrees on its truth value. Prototypical example include $\axTrust$ and $(\top \cto p) \to p$.
In these cases, the interpretation of the antecedent of the conditional implication is either $\bot$ or $\top$, hence its truth set is always admissible, hence they are persistent with repsect to any notion of fill-in.

Another fill-in agnostic formula is the law of excluded middle, $p \vee \neg p$. As a consequence, all completeness results derived in this paper also apply to the classical setting.

\medskip
\noindent\textit{The principal fill-in} sets $\kappa_{\mathrm{pr}}(s)(x, u) = {\uparrow}x$ for all non-admissible $u$. Formulas preserved by this fill-in should be those where the conditional relation agrees with $\leq$, at least locally.
This may lead to general persistence results akin the one in Section~\ref{subsec:empty}.

\medskip
\noindent\textit{The reflexive fill-in} sets $\kappa_{\mathrm{ref}}(s)(x, u) = u$ for all non-admissible $u$. This fill-in is always well defined and produces a conditional frame in which every world accesses, via a non-admissible proposition, exactly the worlds satisfying that proposition. While a full persistence theory for this fill-in remains to be developed, it appears well suited to axioms that assert a close relationship between the selection function output and its propositional input.

\medskip
\noindent\textit{The monotone fill-in} sets $\kappa_{\mathrm{mon}}(s)(x, u) = \bigcup\{s(x, a) \mid a \in A,\, a \subseteq u\}$. This fill-in is natural in the presence of monotonicity axioms, which assert that the selection function is monotone in its second argument. The monotone fill-in is in a natural sense the least fill-in that makes the selection function monotone on all upsets, and one would expect persistence results for monotone axioms to follow from this construction.

\medskip
A general investigation of fill-ins and their associated persistence notions would clarify the landscape of Kripke completeness for intuitionistic conditional logics. In particular, it would be interesting to determine whether there exist axioms for which no fill-in derived from a modal translation can witness persistence, but for which a persistence result can nonetheless be obtained by a direct conditional argument. Such results would demonstrate that the conditional setting is genuinely richer than its modal counterpart from the perspective of completeness theory.

\section{Conclusion}

  We introduced the fill-in method to obtain completeness results
  for intuitionistic conditional logics.
  More specifically, we used a canonical model construction to prove completeness
  of extensions of the basic intuitionistic conditional logic with respect to
  certain general conditional frames,
  and then employed fill-ins to close the gap between the general and
  full semantics of the logic.

  There are many directions further development of intuitionistic conditional
  logics.

\begin{description}
  \item[\bfseries Explore more fill-ins and persistence results]
        The most obvious extension to our paper is to further explore
        fill-ins and the completeness results they prove. We have only
        introduced a few notions of fill-ins, but there could be others
        that work well for more complicated axioms or allow us to
        prove completeness for combinations of axioms that are not captured
        by the fill-ins we have seen so far.
        In particular, we expect that mirroring the canonical
        extension~\cite{GehJon94,GehJon04}
        will give rise to a fill-in that is well suited for intuitionistic
        conditional logics satisfying monotonicity axioms.

  \item[\bfseries More transfer results]
        The GMT translation not only gives rise to persistence results,
        but also to theorems regarding decidability and the finite model
        property.
        It would be interesting to investigate to what extend these can be
        transferred to the conditional setting.
  \item[\bfseries Conditional possibility operator]
        Lastly, we note that our conditional operator is based on a modal
        necessity operator $\cto$. One could consider a conditional possibility
        operator $\Diamondright$ in the spirit of Lewis~\cite{Lew73} with the interesting
        caveat that the intuitionistic diamond is often not considered dual to the
        intuitionistic box unlike classical modal logic~\cite{Olk24,DalGir26}.
        It would be interesting to investigate the fill-in method in presence of $\Diamondright$.
\end{description}

%%%%%%%%%%%%%%%%%%%%%%%%%%%%%%%%%%%%%%%%%%%%%%%%%%%%%%%%%%%%%%%%%%%%%%%%%%%%%%%%
\bibliographystyle{plainnat}
{\footnotesize
\bibliography{ick.bib}}

\appendix
\section{Some correspondence conditions}

\begin{lemma}
  Let $\mb{X} = (X, \leq, s, A)$ be a full or descriptive general conditional frame.
  Then $\mb{X}$ validates $\axMP$ $(p \cto q) \to (p \to q)$
  if and only if it satisfies for all $x \in X$ and $a \in A$:
  \begin{enumerate}\itemindent=3em
    \renewcommand{\labelenumi}{\textup{(}\theenumi\textup{)} }
    \renewcommand{\theenumi}{$\mathsf{mp}$-corr}
    \item \label{ax:mp-corr} ${\uparrow}x \cap a \subseteq s(x, a)$
  \end{enumerate}
\end{lemma}
\begin{proof}
  Suppose $\mb{X}$ satisfies~\eqref{ax:mp-corr}.
  Let $V$ be any valuation, and let $x \in X$ be such that $x \models (p \cto q)$.
  Then $s(x, V(p)) \subseteq V(q)$.
  Moreover, by~\eqref{ax:mp-corr} we have
  ${\uparrow}x \cap V(p) \subseteq s(x, V(p))$.
  This implies ${\uparrow}x \cap V(p) \subseteq V(q)$,
  hence $x \models p \to q$.

  For the converse, suppose that $\mb{X}$ does not satisfy~\eqref{ax:mp-corr}.
  Then we can find $x \in X$ and an admissible upset $a$ such that
  $x \in a$ but $x \notin s(x, a)$.
  Since in both full and descriptive conditional frames we have
  $s(x, a) = \bigcap \{ b \in A \mid s(x, a) \subseteq b \}$, we can
  find an admissible upset $b$ containing $s(x, a)$ but not $x$.
  The valuation $V$ with $V(p) = a$ and $V(q) = c$ yields a model
  falsifying $\axMP$.
\end{proof}

\begin{lemma}\label{lem:cs-corr}
  Let $\mb{X} = (X, \leq, s, A)$ be a full or descriptive general conditional frame.
  Then $\mb{X}$ validates $\axCS$ $(p \wedge q) \to (p \cto q)$
  if and only if for all $x \in X$ and $a \in A$:
  \begin{enumerate}\itemindent=3em
    \renewcommand{\labelenumi}{\textup{(}\theenumi\textup{)} }
    \renewcommand{\theenumi}{$\mathsf{cs}$-corr}
    \item \label{ax:cs-corr} $x \in a$ implies $s(x, a) \subseteq {\uparrow}x$
  \end{enumerate}
\end{lemma}
\begin{proof}
  Suppose $\mb{X}$ satisfies~\eqref{ax:cs-corr}.
  Let $x$ be a world and $V$ any valuation.
  If $x \models p \wedge q$ then $x \in V(p) \cap V(q)$,
  so $x \in V(p)$ and $x \in V(q)$. This implies ${\uparrow}x \subseteq V(q)$.
  By assumption $x \in V(p)$ implies $s(x, V(p)) \subseteq {\uparrow}x \subseteq V(q)$,
  so $x \models p \cto q$.

  For the converse, suppose that $\mb{X}$ does not satisfy~\eqref{ax:cs-corr}.
  Then there exist a world $x$ and an admissible upset $a$ such that
  $x \in a$ and $s(x, a) \not\subseteq {\uparrow}x$,
  so there is some $y \in s(x, a)$ with $x \not\leq y$.
  Since in a full or descriptive conditional frame we have
  ${\uparrow}x = \bigcap \{ b \in A \mid x \in b \}$,
  we can find an admissible ubset $b$ containing ${\uparrow}x$ but not $y$.
  Let $V$ be a valuation with $V(p) = a$ and $V(q) = b$.
  Then $x \models p \wedge q$ because $x \in a \cap b = V(p) \cap V(q)$,
  but $x \not\models p \cto q$ because $R_{V(p)}[x] = R_a[x] \not\subseteq b = V(q)$.
\end{proof}

\end{document}